\def\Mopq{M_{(\omega)} ^{p,q}}
 \def\cS{\mathcal{S}}
 \def\cD{\mathcal{D}}
 \newcommand{\stft}{short-time Fourier transform}
\newcommand{\loc}{\operatorname{loc}}
\newcommand{\scal}[2]{\langle #1,#2\rangle}
\newcommand{\rr}[1]{\mathbf R^{#1}}
\newcommand{\nm}[2]{\Vert #1\Vert _{#2}}
\newcommand{\sets}[2]{\{ \, #1\, ;\, #2\, \} }
\newcommand{\ep}{\varepsilon}
\newcommand{\fy}{\varphi}
\newcommand{\cdo}{\, \cdot \, }
\newcommand{\supp}{\operatorname{supp}}
\newcommand{\eabs}[1]{\langle #1\rangle}     
\newcommand{\vrum}{\vspace{0.1cm}}
\newcommand{\ttbigcap}{{\textstyle{\, \bigcap \, }}}
\DeclareMathOperator{\WF}{WF}
\DeclareMathOperator{\DF}{DF}
\newcommand{\MRs}{\mathscr M_{\{s\}}}
\newcommand{\Mv}{\mathscr M_{v}}
\numberwithin{equation}{section}          
\newtheorem{thm}{Theorem}
\numberwithin{thm}{section}
\newcommand{\rubrik}{}
\newtheorem{prop}[thm]{Proposition}
\newtheorem{cor}[thm]{Corollary}
\newtheorem{lemma}[thm]{Lemma}
\theoremstyle{definition}
\newtheorem{defn}[thm]{Definition}
\theoremstyle{remark}
\newtheorem{rem}[thm]{Remark}              
\author{Karoline Johansson}
\address{Department of Computer science, Mathematics and Physics,
Linn{\ae}us University, V{\"a}xj{\"o}, Sweden
}
\email{karoline.johansson@lnu.se}
\author{Stevan Pilipovi\' c}
\address{Department of Mathematics and Informatics,
University of Novi Sad, Novi Sad, Serbia}
\email{stevan.pilipovic@dmi.uns.ac.rs}
\author{Nenad Teofanov}
\address{Department of Mathematics and Informatics,
University of Novi Sad, Novi Sad, Serbia}
\email{nenad.teofanov@dmi.uns.ac.rs}
\author{Joachim Toft}
\address{Department of Computer science, Mathematics and Physics,
Linn{\ae}us University, V{\"a}xj{\"o}, Sweden
}
\email{joachim.toft@lnu.se}
\title{\textbf {Micro-local analysis in some spaces of ultradistributions}}
\keywords{Wave-front sets, weighted Fourier-Lebesgue spaces,
ultradistributions, pseudo-differential operators, micro-local
analysis}
\subjclass[2000]{35A18,35S30,42B05,35H10}
\begin{document}

\begin{abstract}
In this paper we extend some results from \cite{JPTT1} and
\cite{PTT}, concerning wave-front sets of Fourier-Lebesgue and modulation space types,
to a broader class of spaces of ultradistributions, and relate these wave-front sets with
the usual wave-front sets of ultradistributions.

\par

Furthermore, we use Gabor frames for the description of
discrete wave-front sets, and prove that these wave-front sets coincide with
corresponding continuous ones.
\end{abstract}

\maketitle

\section{Introduction} \label{sec0}

Wave-front sets with respect to Fourier Lebesgue and
modulation spaces were introduced in \cite{PTT} and studied further in \cite{PTT2, PTT3, PTT4}.
Among other properties, it was proved that wave-front sets of Fourier Lebesgue and
modulation spaces agree with each others, and that the usual
wave-front sets with respect to smoothness (cf. \cite[Sections
8.1--8.3]{Hrm-nonlin}) can be obtained as wave-front sets of sequences of
Fourier Lebesgue or modulation spaces.
Discrete versions of wave-front sets of Fourier Lebesgue and
modulation spaces, related to those wave-front sets in \cite{RT1}, were introduced and studied in
\cite{JPTT1}. In particular, it was proved that these agree with corresponding continuous ones.

\par

In this paper we extend some results from \cite{JPTT1} and
\cite{PTT} to a broader class of spaces of ultradistributions.
Instead of polynomial growth at infinity, here we study objects
which may have almost exponential growth at infinity.
We may thereby recover the usual wave-front sets of ultradistributions, see Section \ref{sec2}.

\par

Furthermore, we introduce discrete versions of wave-front sets of ultradistributions and prove
that these wave-front sets coincide with corresponding continuous ones (cf. \cite{JPTT1}).
The results in form of series, established when introducing
these discrete versions, might be
useful for numerical analysis of micro-local properties of functions and
ultradistributions. For example, we use Gabor frames for the description of
discrete wave-front sets. (See \cite{FS1, FS2} for numerical treatment of Gabor frame theory.) With
that respect we emphasize that in the process of analysis and
synthesis of a signal, Gabor frame coefficients also give
information on micro-local properties of the signal.

\par

Since compactly supported smooth functions are used in the process
of microlocalization we are limited to the use of weights with
almost exponential growth at infinity described by the
Beurling-Domar condition. 
We refer to  subsection \ref{notions} for the notions and to  \cite{Gro2}
for a discussion on the role of weights in time-frequency
analysis.

\par

Our investigation can therefore be considered as the starting point in
the study of  analytic wave-front sets and pseudodifferential operators with ultrapolynomial
symbols, known also as symbol-global type operators. This will be done in a separate paper.

\par

The paper is organized as follows. In Section \ref{sec1} we introduce wave-front sets of
Fourier Lebesgue types with respect to ultradistributions. These are compared to other
types of wave-front sets in Section \ref{sec2}. In Section \ref{sec3} we discuss wave-front
sets of modulation space types and show equivalence with those of Fourier Lebesgue
types. Sections \ref{sec4}--\ref{sec6} are devoted to discrete versions of wave-front sets.
For the reader's convenience the results in Sections \ref{sec1}--\ref{sec6} are formulated in
terms of ultradistributions of Roumieu  type and necessary explanations
concerning the Beurling type ultradistributions and differences between the Roumieu
and Beurling cases are given in the last section.

\par

\subsection{Basic notions and notation} \label{notions}

\par

In this subsection we collect some notation and notions which will be used in the sequel.

\par

We put $\mathbf N =\{0,1,2,\dots \}$, $\mathbf Z_+=\{1,2,3,\dots\}$, $\eabs x =(1+|x|^2)^{1/2}$,
for $x\in \rr d$, and $A\lesssim B$ to
indicate $A\leq c B$ for a suitable constant $c>0$. The
symbol $B_1 \hookrightarrow B_2$ denotes the continuous and dense embedding of
the topological vector space $B_1$ into $B_2$. The scalar product in $L^2 $ is denoted  by
$(\cdot , \cdot)_{L^2}.$
Translation and modulation operators are given by
\begin{equation}
  \label{eqi1}
 T_xf(t)=f(t-x)\quad{\rm and}\quad M_{\xi}f(t)= e^{i \eabs{\xi, t}}f(t) \, .
\end{equation}

\par

\subsubsection{Weights}

\par

In general, a weight function is a non-negative function.

\begin{defn}
Let $\omega, v$ be non-negative functions. Then 
\begin{enumerate}
\item $v$ is called \emph{submultiplicative} if
\[
v(x+y)\leq v(x)v(y), \qquad \forall \ x,y\in\rr{d};
\]
\item $\omega$ is called $v$-\emph{moderate} if 
\[
\omega(x+y) \lesssim v(x)\omega (y), \qquad \forall \ x,y\in\rr{d}.
\]
\end{enumerate}
For a given submultiplicative weight $v$  the set of all
$v$-moderate weights will be denoted by $\Mv$.
\end{defn}

\par

If $\omega \in \Mv $, then $1/v \lesssim \omega \lesssim v$,  $\omega \neq 0$ everywhere and
$1/\omega \in \Mv$.

\par

In the sequel $v$ will always stand for a submultiplicative function.
Submultiplicativity implies that $v$ is dominated by an exponential function, i.e.
\begin{equation} 
 \exists\, C, k>0 \quad \mbox{such\, that}\quad  v \leq C e^{k |\cdo|}.
\end{equation}

%

\par

A submultiplicative weight $v$ satisfies the GRS condition (the
Gelfand-Raikov-Shilov condition) if $ \displaystyle \lim_{n\to
\infty} v(n x)^{1/n} =1$, for every $x\in \rr d$.


\par

Let $s>1$.
By $\MRs (\rr d)$ we denote the set of  all weights which are moderate
with respect to a weight $v$ which satisfies $ v \leq Ce^{k|\cdo|^{1/s}}$
for some positive constants $C$ and $k$.
The weight $v$ satisfy the Beurling-Domar non-quasi-analyticity condition which takes the form
\[
\sum\limits_{n=0 } ^{\infty}\frac{\log v(nx)}{n^2} < \infty, \quad  x \in \rr d,
\]
and which is  stronger than the Gelfand-Raikov-Shilov condition, cf. \cite{Gro2}.

\par

\subsubsection{Test function spaces and their duals}

\par

Next we introduce spaces of test functions and their duals in the
context of spaces of ultradistributions. We start with
Gelfand-Shilov type spaces.

\par

\begin{defn}
Let  $ s>1$ and $A>0$. We denote by  $\cS^{s}_{A}(\rr d)$ the space
of all functions $\fy \in C^{\infty}(\rr d)$ such that the norm 
\[
\nm{\fy}{s,A} =\sup_{\alpha,\beta\in \mathbf N_0^d} \sup_{x\in \rr
d} \frac{A^{|\alpha + \beta| }}{\alpha!^s\beta!^s}
\eabs{x}^{|\alpha|}|\fy^{(\beta)}(x)|
\]
is finite. Then the projective limit is denoted by $\cS^{(s)}(\rr d)$, i.{\,}e., 
\[
\cS^{(s)}(\rr d) = \operatorname{proj}\lim_{A\to \infty}
\cS^{s}_{A}(\rr d),
\]
and the inductive limit is denoted by $\cS^{\{s\}}(\rr d)$, i.{\,}e.,
\[
\cS^{\{s\}} (\rr d) =\operatorname{ind}\lim_{A\to 0} \cS^{s}_{A}(\rr
d).
\]
The space $\cS^{\{s\}}$ is called the Gelfand-Shilov space of order $s$.
\end{defn}

\par

The strong dual spaces of $\cS^{(s)} (\rr d)$ and $\cS^{\{s\}}
(\rr d)$ are spaces of tempered ultradistributions of Beurling and
Roumieu type denoted by $(\cS^{(s)})' (\rr d)$ and $(\cS^{\{s\}})'
(\rr d),$ respectively. If $s>t,$ then
\begin{eqnarray*}
\cS^{(t)}(\rr d) & \hookrightarrow &  \cS^{\{t\}} (\rr d)
\hookrightarrow \cS^{(s)}(\rr d) \hookrightarrow  \cS^{\{s\}} (\rr  d) \\
& \hookrightarrow &
(\cS^{\{s\}})' (\rr  d) \hookrightarrow (\cS^{(s)})'(\rr d)
\hookrightarrow  (\cS^{\{t\}})' (\rr d) \hookrightarrow (\cS^{(t)})' (\rr d).
\end{eqnarray*}

\par

In order to perform (micro)local analysis we use the following
test function spaces on open sets, cf. \cite{K1}.

\par

\begin{defn} Let $ X $ be an open set in $ \rr d$. For a given compact set $K
\subset X$, $ s> 1$ and  $ A>0$  we denote by $ \mathcal{E}^{s} _{A,K} $
the space of all  $\fy \in C^{\infty}(X)$ such that the norm
\begin{equation} \label{ultra-norm}
\| \fy \|_{s,A,K} = \sup_{\beta\in \mathbf N_0^n} \sup_{x\in K}
\frac{A^{| \beta |}}{\beta!^s} |\fy^{(\beta)}(x)|
\end{equation}
is finite.

We denote by $ \mathcal{E}^{s} _{A} (K) $ the space of functions
$\fy \in C^{\infty}(X)$ such that \eqref{ultra-norm} holds and $ \supp \fy \subseteq K.$

Let $(K_n)_n$ be a sequence of compact sets such that
$K_n\subset\subset K_{n+1}$ and $\bigcup K_n =X$. Then
\begin{align*}
\mathcal{E}^{(s)}(X) &= \operatorname{proj}\lim_{n\to \infty}
(\operatorname{proj}\lim_{A\to \infty} \mathcal{E}^{s} _{A,K_n}),\\
\mathcal{E}^{\{s\}}(X)&=\operatorname{proj}\lim_{n\to \infty}
(\operatorname{ind}\lim_{A\to 0} \mathcal{E}^{s} _{A,K_n}),\\
\mathcal{D}^{(s)}(X) &= \operatorname{ind}\lim_{n\to \infty}
(\operatorname{proj}\lim_{A\to \infty} \mathcal{E}^s _A (K_n)),\\
\intertext{and} 
\mathcal{D}^{\{s\}}(X)&=\operatorname{ind}\lim_{n\to \infty}
(\operatorname{ind}\lim_{A\to 0} \mathcal{E}^s _A (K_n)).
\end{align*}
\end{defn}

The spaces of linear functionals over  $\mathcal{D}^{(s)}(X)$ and
$\mathcal{D}^{\{s\}}(X)$, denoted by $ (\mathcal{D}^{(s)})'(X)$
and $ (\mathcal{D}^{\{s\}})'(X)$ respectively, are called the
spaces of {\em ultradistributions} of Beurling and Roumieu  type
respectively, while the spaces of linear functionals over  $
\mathcal{E}^{(s)}(X)$ and    $ \mathcal{E}^{\{s\}}(X)$, denoted by
$ (\mathcal{E}^{(s)})'(X)$ and $ (\mathcal{E}^{\{s\}})'(X)$,
respectively are called the spaces of {\em ultradistributions
of compact support} of  Beurling  and Roumieu type respectively,
\cite{K1}. We have
\begin{align*}(\mathcal{E}^{ \{s\} })'(X) &\subset
(\mathcal{E}^{(s)})'(X),\quad
 (\mathcal{E}^{(s)})'(X) \subset
(\mathcal{E}^{(s)})'(\rr d) \quad \text{and}\\ (\mathcal{E}^{\{s\}})'(X)
&\subset (\mathcal{E}^{\{s\}})'(\rr d).
\end{align*}

\par

Clearly, 

\begin{align*} (\mathcal{E}^{\{s\}})' (\rr d)  &\subset
(\cS^{\{s\}})' (\rr d) 
\subset
(\mathcal{D}^{\{s\}})' (\rr d)\\
 \intertext{and} 
 (\mathcal{E}^{(s)})'(\rr d) &\subset
(\cS^{(s)})' (\rr d)
\subset
(\mathcal{D}^{(s)})' (\rr d).
\end{align*}
 Any  ultra-distribution with compact support
can be viewed as an element of $ (\cS^{(1)})' (\rr d)$.

\par

Obviously, $\mathcal{D}^{(s)}(X)$  ($\mathcal{D}^{\{s\}}(X)$  resp.) are subspaces of
$ \mathcal{E}^{(s)} (X) $ (of $ \mathcal{E}^{\{s\}} (X) $ resp.)
whose elements are compactly supported. We also remark that a usual notation for
the space $\mathcal{D}^{\{s\}}(X) $ is  $ G^{s}(X) $ (cf. \cite{R}).

\par

%
%
%
%
%
%

\subsubsection{Fourier-Lebesgue spaces}

\par

The Fourier transform $\mathscr F$ is the linear and continuous
mapping on $\mathscr S'(\rr d)$ which takes the form
\[
(\mathscr Ff)(\xi )= \widehat f(\xi ) \equiv (2\pi )^{-d/2}\int _{\rr
{d}} f(x)e^{-i\scal  x\xi }\, dx
\]
when $f\in L^1(\rr d)$. It is a homeomorphism on $(\cS^{\{s\}})' (\rr d)$
(on $(\cS^{(s)})' (\rr d)$ resp.) which restricts to a homeomorphism on $\cS^{\{s\}} (\rr d)$
(on $\cS^{(s)} (\rr d)$ resp.)  and
to a unitary operator on $L^2(\rr d)$.

\medspace

Let $q\in [1,\infty ]$, $s>1$ and $\omega \in \MRs (\rr d)$.
The (weighted) Fourier Lebesgue space $\mathscr
FL^q_{(\omega )}(\rr d)$ is the inverse Fourier image of
$L^q _{(\omega )} (\rr d)$, i.{\,}e. $\mathscr FL^q_{(\omega )}(\rr d)$
consists of all $f\in (\mathcal S ^{\{ s \}})'(\rr d)$ such that
\begin{equation}\label{FLnorm}
\nm f{\mathscr FL^{q}_{(\omega )}} \equiv \nm {\widehat f\cdot \omega
}{L^q} .
\end{equation}
is finite. If $\omega =1$, then the notation $\mathscr FL^q$
is used instead of $\mathscr FL^q_{(\omega )}$. We note that if
$\omega (\xi )=\eabs \xi ^s$, then $\mathscr
FL^{q}_{(\omega )}$ is the Fourier image of the Bessel potential space
$H^p_s$. 

\par

\begin{rem} \label{x-independence}
In many situations it is convenient to permit an $x$
dependency for the weight $\omega$ in the definition of Fourier
Lebesgue spaces. More precisely, for each $\omega \in \MRs (\rr {2d})$
we let $\mathscr FL^q_{(\omega )}$ be the set of all ultradistributions $f$ such that
\[
\nm f{\mathscr FL^q_{(\omega )}}
\equiv \nm {\widehat f\, \omega (x,\cdo )}{L^q}
\]
is finite. Since $\omega$ is $v$-moderate it follows that
different choices of $x$ give rise to equivalent norms.
Therefore the condition $\nm f{\mathscr FL^{q}_{(\omega )}}<\infty$ is
 independent of $x$, and  it follows that $\mathscr FL^q_{(\omega )}(\rr
d)$ is  independent of $x$ although $\nm \cdo {\mathscr
FL^{q}_{(\omega )}}$ might depend on $x$.
\end{rem}

\par

\par

\section{Wave-front sets of Fourier-Lebesgue type in spaces of Roumieu
type ultradistributions}\label{sec1}

\par

In this section we introduce wave-front sets of Fourier-Lebesgue type in spaces of ultradistributions
 of Roumieu type.

\par

Let $ s>1$, $q\in [1,\infty ]$, and  $\Gamma \subseteq \rr d\setminus 0$ be an open cone.
If $f\in (\cS^{\{s\}})'(\rr d)$ and $\omega \in \MRs (\rr {2d})$
we define
\begin{equation}
|f|_{\mathscr FL^{q,\Gamma}_{(\omega )}} = |f|_{\mathscr
FL^{q,\Gamma}_{(\omega ), x}}
\equiv
\Big ( \int _{\Gamma} |\widehat f(\xi )\omega (x,\xi )|^{q}\, d\xi
\Big )^{1/q}\label{skoff1}
\end{equation}
(with obvious interpretation when $q=\infty$). We
note that $|\cdo |_{\mathscr FL^{q,\Gamma}_{(\omega ), x}}$ defines
a semi-norm on $(\cS^{\{s\}})'(\rr d)$ which might attain the value $+\infty$.
Since $\omega $ is $v$-moderate  it follows that different $x \in \rr d$ gives rise to
equivalent semi-norms $ |f|_{\mathscr FL^{q,\Gamma}_{(\omega ),
x}}$. Furthermore, if $\Gamma =\rr d\setminus 0$, $f\in
\mathscr FL^{q}_{(\omega )}(\rr d)$ and $q<\infty$, then
$|f|_{\mathscr FL^{q,\Gamma}_{(\omega ), x}}$ agrees with the Fourier
Lebesgue norm $\nm f{\mathscr FL^{q}_{(\omega ), x}}$ of $f$.

\par

For the sake of notational convenience we set
\begin{equation} \label{notconv1}
\mathcal B=\mathscr FL^q_{(\omega )}=\mathscr FL^q_{(\omega )} (\rr
d), \quad
\mbox{and}
\quad
|\cdo |_{\mathcal B(\Gamma )}=|\cdo |_{\mathscr
FL^{q,\Gamma}_{(\omega ), x}}.
\end{equation}
We let $\Theta _{\mathcal B}(f)=\Theta _{\mathscr FL^{q} _{(\omega
)}} (f)$ be the set of all $ \xi \in \rr d\setminus 0 $ such that
$|f|_{\mathcal B(\Gamma )} < \infty$, for some open conical
neighbourhood $\Gamma = \Gamma_{\xi}$ of $\xi$.  We also let
$\Sigma_{\mathcal B} (f)$ be the complement of $ \Theta_{\mathcal
B} (f)$ in $\rr d\setminus 0 $. Then
$\Theta_{ \mathcal B} (f)$ and $\Sigma_{\mathcal
B} (f)$ are open respectively
closed subsets in $\rr d\setminus 0$, which are independent of
the choice of $ x \in \rr d$ in \eqref{skoff1}.

\par

\begin{defn}\label{wave-frontdef1}
Let $ s>1$, $q\in [1,\infty ]$, $\mathcal B$ be as in \eqref{notconv1}, and let
$X$ be an open subset of $\rr d$. If
$\omega \in \MRs (\rr {2d})$,
then the wave-front set of ultradistribution
$f\in (\cD^{(s)})' (\rr d)$,
$
\WF _{\mathcal B}(f)  \equiv  \WF _{\mathscr FL^q_{(\omega )}}(f)
$
with respect to $\mathcal B$ consists of all pairs $(x_0,\xi_0)$ in
$X\times (\rr d \setminus 0)$ such that
$
\xi _0 \in  \Sigma _{\mathcal B} (\fy f)
$
holds for each $\fy \in \mathcal{D}^{(s)} (X)$
 such that $\fy (x_0)\neq
0$.
\end{defn}

\par

We note that $\WF  _{\mathcal B}(f)$ is a closed set in $\rr d\times
(\rr d\setminus 0)$, since it is obvious that its complement is
open. We also note that if $ x\in \rr d$ is fixed and $\omega _0(\xi
)=\omega (x,\xi )$, then $\WF _{\mathcal B} (f)=\WF _{\mathscr
FL^q_{(\omega _0)}}(f)$, since $\Sigma _{\mathcal B}$ is independent
of $x$.

\par

The following theorem shows that wave-front sets with respect to
$\mathscr FL^q_{(\omega )}$ satisfy appropriate micro-local
properties. It also shows that such wave-front sets are decreasing
with respect to the parameter $q$, and increasing with respect to the
weight $\omega$.

\par

\begin{thm}\label{wavefrontprop11}
Let $s>1$, $q,r\in [1,\infty ]$, $X$ be an open set in $\rr d$ and
$\omega ,\vartheta \in \MRs (\rr {2d})$
be such that
\begin{equation}\label{qandomega1}
r\le q,\quad \text{and}\quad \omega
(x,\xi )\lesssim \vartheta (x,\xi ).
\end{equation}
Also let  $\mathcal B$  be as in \eqref{notconv1} and put
$ {\mathcal B _0} =\mathscr FL^r_{(\vartheta )}
(\rr d)$. If $f\in (\cD^{(s)})' (\rr d)$ and  $\fy \in \mathcal{D}^{(s)}(X)$
then
\[
\WF _{ {\mathcal B} }(\fy \, f)\subseteq \WF _{\mathcal B_0}(f).
\]
\end{thm}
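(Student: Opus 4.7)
The plan is to argue by contrapositive: assuming $(x_0,\xi _0)\notin \WF_{\mathcal B_0}(f)$, I will exhibit a cutoff $\chi \in \mathcal D^{(s)}(X)$ with $\chi (x_0)\neq 0$ such that $\xi _0\in \Theta _{\mathcal B}(\chi \varphi f)$, which by Definition~\ref{wave-frontdef1} yields $(x_0,\xi _0)\notin \WF_{\mathcal B}(\varphi f)$. By hypothesis there exist $\psi \in \mathcal D^{(s)}(X)$ with $\psi (x_0)\neq 0$ and an open conic neighbourhood $\Gamma $ of $\xi _0$ for which $|\psi f|_{\mathcal B_0(\Gamma )}<\infty $. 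I select $\chi \in \mathcal D^{(s)}(X)$ with $\chi (x_0)\neq 0$ and $\supp \chi \subset \{\psi \neq 0\}$; by Gevrey-class division the function $g:=\chi \varphi /\psi $, extended by $0$ outside $\supp \chi $, lies in $\mathcal D^{(s)}(X)$, and $\chi \varphi f=g\cdot (\psi f)$. Since $\psi f\in (\mathcal E^{(s)})'(\rr d)$ is compactly supported,
\[
\widehat{\chi \varphi f}(\xi )=(2\pi )^{-d/2}(\widehat g\ast \widehat{\psi f})(\xi ).
\]

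Next, pick a smaller conic neighbourhood $\Gamma '\ni \xi _0$ with $\overline{\Gamma '}\setminus \{0\}\subset \Gamma $ and split the convolution as $J_1+J_2$, where $J_1$ integrates over $\Gamma $ and $J_2$ over $\Gamma ^c$. For $J_1$, the $v$-moderation of $\omega $ together with $\omega \lesssim \vartheta $ gives $\omega (x,\xi )\lesssim v(\xi -\eta )\vartheta (x,\eta )$. Since $g\in \mathcal D^{(s)}$, the product $G(\zeta ):=v(\zeta )|\widehat g(\zeta )|$ has Gevrey decay $\lesssim e^{-h|\zeta |^{1/s}}$ (the Beurling--Domar bound on $v$ is precisely what lets the decay of $\widehat g$ dominate $v$), so $G\in L^p(\rr d)$ for every $p\in [1,\infty ]$. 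With $H(\eta ):=\mathbf 1_{\Gamma }(\eta )\vartheta (x,\eta )|\widehat{\psi f}(\eta )|\in L^r(\rr d)$ by assumption, Young's inequality applied with $1/p+1/r=1+1/q$ (solvable for $p\in [1,\infty ]$ exactly because $r\leq q$) yields $\omega (x,\cdo )J_1\in L^q(\rr d)\subset L^q(\Gamma ')$.

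For $J_2$ the cone separation is decisive: for $\xi \in \Gamma '$ and $\eta \notin \Gamma $ there is $c_0>0$ with $|\xi -\eta |\geq c_0\max (|\xi |,|\eta |)$, hence $|\xi -\eta |^{1/s}\geq (c_0^{1/s}/2)(|\xi |^{1/s}+|\eta |^{1/s})$. Combining the resulting decay of $\widehat g(\xi -\eta )$ with the bounds $\omega (x,\xi )\lesssim e^{k|\xi |^{1/s}}$ and $|\widehat{\psi f}(\eta )|\lesssim e^{k|\eta |^{1/s}}$ (the latter because $\psi f$ is a compactly supported ultradistribution), I obtain $\omega (x,\xi )|J_2(\xi )|\lesssim e^{-\delta |\xi |^{1/s}}$ for some $\delta >0$, which lies in every $L^q$. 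Adding the two contributions gives $|\chi \varphi f|_{\mathcal B(\Gamma ')}<\infty $, so $\xi _0\in \Theta _{\mathcal B}(\chi \varphi f)$, completing the contrapositive. The main technical obstacle is the Gevrey division step $\chi \varphi /\psi \in \mathcal D^{(s)}$; a secondary point is calibrating the Gevrey parameter of $g$ so that $\widehat g$ absorbs $v$, which is automatic in the Beurling $\mathcal D^{(s)}$ setting and can be arranged in the Roumieu case by choosing the cutoff from $\mathcal D^{(t)}$ with $1<t<s$.
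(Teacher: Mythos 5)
Your argument is correct and, at its core, coincides with the paper's: both express the Fourier transform of a cutoff times a compactly supported piece as a convolution $\widehat g * \widehat{u}$, split the frequency integral over a cone and its complement, apply Young's inequality with $1/p+1/r=1+1/q$ on the cone part (this is exactly where $r\le q$ enters, and your exponents are the right way around --- the paper's own display interchanges $q$ and $r$ at this point), and on the complement use the separation inequality $|\xi -\eta |^{1/s}\gtrsim |\xi |^{1/s}+|\eta |^{1/s}$ together with the super-exponential decay of the Fourier transform of a $\cD ^{(s)}$ function to absorb the $e^{k|\cdot |^{1/s}}$ growth of $\widehat{\psi f}$ and of the weight. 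The one structural difference is your localization: writing $\chi \fy f=g\cdot (\psi f)$ with $g=\chi \fy /\psi$ forces you to invoke inverse-closedness of the Beurling--Gevrey class. That fact is true for $p!^s$, $s>1$, but it is the only nontrivial step you merely assert, and it is avoidable: since $(x_0,\xi _0)\notin \WF _{\mathcal B}(\fy f)$ only requires exhibiting \emph{one} admissible cutoff, you may take $\chi =\psi$ itself, so that $\chi \cdot (\fy f)=\fy \cdot (\psi f)$ is a $\cD ^{(s)}$ multiplier acting on the compactly supported ultradistribution $\psi f\in (\mathcal E^{(s)})'(\rr d)$, and your convolution estimate applies verbatim with $g=\fy$; this is precisely how the paper reduces to the case $f\in (\mathcal E^{(s)})'(\rr d)$. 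Everything else --- the solvability of the Young exponent, the moderation step $\omega (x,\xi )\lesssim v(\xi -\eta )\vartheta (x,\eta )$, and the $e^{-\delta |\xi |^{1/s}}$ bound on the off-cone term --- checks out.
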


\par

\begin{proof}
By the definitions it is sufficient to prove
\begin{equation}\label{chi-subsetAA1}
\Sigma_{ {\mathcal B } } (\fy f) \subseteq
\Sigma_{\mathcal B_0} (f)
\end{equation}
when $ \fy \in  \mathcal{D}^{(s)}(X)$, $\vartheta=\omega$, and $f\in (\mathcal E^{(s)})'(\rr
d)$, since the statement only involves local assertions. For the
same reasons we may assume that $\omega (x,\xi ) =\omega (\xi )$ is
independent of $x$. 
Finally, we prove the assertion for $ r \in [1, \infty ).$ The case $ r = \infty $ follows by similar arguments
and is left to the reader. 

\par

Choose open cones $\Gamma _1$ and $\Gamma_2$ in $\rr d$ such that
$\overline {\Gamma _2}\subseteq \Gamma _1$.
We will use the fact that  if $f\in (\mathcal E^{(s)})'(\rr d)$ then
$|\widehat f(\xi )\omega(\xi )|\lesssim e^{k|\xi|^{1/s}}$ for some  $k>0$ and prove that
for every $N>0$, there exist $C_N>0$ such that
\begin{multline}\label{cuttoff1}
|\fy f|_{ {\mathcal B} (\Gamma _2)}\le C_N \Big (|
f|_{\mathcal B_0(\Gamma _1)} +\sup _{\xi \in \rr
d} \big ( |\widehat f(\xi )\omega(\xi )|e^{-N|\xi|^{1/s}} \big )
\Big )
\\[1ex]
\text{when}\quad \overline \Gamma _2\subseteq \Gamma_1.
\end{multline}

\par

Since $\omega \in \MRs (\rr {2d})$ by letting $F(\xi )=|\widehat f(\xi )\omega
(\xi ) |$
and $\psi (\xi )=|\widehat \fy (\xi )v (\xi )|$ we have
\begin{multline*}
|\fy f| _{ {\mathcal B}(\Gamma _2)} = \Big (\int
_{\Gamma _2}|\mathscr F(\fy f)(\xi )\omega(\xi )|^{r}\, d\xi
\Big )^{1/r}
\\[1ex]
\lesssim\Big (\int _{\Gamma _2}\Big ( \int _{\rr {d}} \psi (\xi -\eta
)F(\eta )\, d\eta \Big )^{r}\, d\xi \Big )^{1/r} \lesssim J_1+J_2,
\end{multline*}
where
\begin{align*}
&J_1  = \Big (\int _{\Gamma _2}\Big ( \int _{\Gamma _1}\psi (\xi
-\eta )F(\eta )\, d\eta \Big )^{r}\, d\xi \Big )^{1/r},
\\[1ex]
&J_2 = \Big (\int _{\Gamma _2}\Big ( \int _{\complement \Gamma _1}\psi
(\xi -\eta )F(\eta )\, d\eta \Big )^{r}\, d\xi \Big )^{1/r}.
\end{align*}
\par

Let $q_0$ be chosen such that $1/q_0+1/q=1+1/r$, and let $\chi
_{\Gamma _1}$ be the characteristic function of $\Gamma _1$. Then
Young's inequality gives
\begin{multline*}
J_1 \le  \Big (\int _{\rr {d}} \Big ( \int _{\Gamma _1}\psi (\xi
-\eta )F(\eta )\, d\eta \Big )^{r}\, d\xi \Big )^{1/r}
\\[1ex]
=\nm {\psi *(\chi _{\Gamma _1} F)}{L^{r}} \le \nm \psi {L^{q_0}}\nm
{\chi _{\Gamma _1}
F}{L^{q}} = C_\psi |f|_{\mathcal B_0(\Gamma _1)},
\end{multline*}
where $C_\psi = \nm \psi {L^{q_0}}<\infty$. 
If $\fy \in \mathcal{D}^{(s)}(X)$, then
for every $ N>0$ there exist $ C_N>0 $ such that
\begin{equation}\label{psiexpest}
\psi (\xi) = |\widehat \fy (\xi )v (\xi )| \leq  C_{{N}} e^{-(N+k)|\xi|^{1/s}} e^{k|\xi|^{1/s}} \leq
C_{{N}} e^{-N|\xi|^{1/s}}.
\end{equation}

\par

In order to estimate $J_2$, we note that  $\overline {\Gamma _2}
\subseteq \Gamma _1$ implies that
\begin{multline}\label{estimate1}
|\xi -\eta |^{1/s}>c\max
(|\xi|^{1/s},|\eta |^{1/s})
\\[1ex]
\geq \frac c2 (|\xi|^{1/s}+|\eta|^{1/s}),\qquad \xi \in
\Gamma _2,\  \eta \notin \Gamma _1
\end{multline}
holds for some constant $c>0$, since this is true when
$1=|\xi |\ge |\eta|$. A combination of \eqref{psiexpest} and \eqref{estimate1} implies
that for every $N_1>0$ we have
\[
\psi(\xi-\eta) \lesssim C e^{-2N_1(|\xi|^{1/s}+|\eta|^{1/s})}.
\]
This gives
\begin{multline*}
J_2 \lesssim \Big (\int _{\Gamma _2}\Big ( \int _{\complement
\Gamma _1}e^{-2N_1(|\xi|^{1/s}+|\eta|^{1/s})} F(\eta )\, d\eta \Big
)^{r}\, d\xi \Big )^{1/r}
\\[1ex]
\lesssim \Big (\int _{\Gamma _2}\Big ( \int _{\complement \Gamma_1}
e^{-2N_1(|\xi|^{1/s}+|\eta|^{1/s})} e^{N_1 |\eta|^{1/s}}(e^{-N_1 |\eta|^{1/s}}
F(\eta))\, d\eta \Big )^{r}\, d\xi \Big )^{1/r}
\\[1ex]
\lesssim \sup _{\eta \in \rr d}|e^{-N_1|\eta|^{1/s}}  F(\eta ))|,
\end{multline*}
which proves \eqref{cuttoff1} and the result follows.
%
\end{proof}

\par

\section{Comparisons to other types of wave-front sets} \label{sec2}

\par

Let $\omega \in \MRs (\rr {2d})$ be moderated with
respect to a weight of polynomial growth at infinity and let $f
\in \mathcal{D}' (X). $ Then $ \WF _{\mathscr FL^q_{(\omega
)}}(f)$ in Definition \ref{wave-frontdef1} is the same as the
wave-front set introduced in \cite[Definition 3.1]{PTT}.
Therefore, the information on regularity in the background of
wave-front sets of Fourier-Lebesgue type in Definition
\ref{wave-frontdef1} might be compared to the information
obtained from the classical wave-front sets, cf. Example 4.9 in
\cite{PTT}.

\par

Next we compare the wave-front sets introduced in Definition
\ref{wave-frontdef1} to the wave-front sets in spaces of
ultradistributions given in \cite{Ho1, P, R}.

\par

Let $ s>1$ and let $X$ be an open subset of $\rr d$. The ultradistribution
$f \in (\mathcal{D}^{(s)})' (\rr d)$
is $(s)$-micro-regular ($\{ s \}$-micro-regular) at $ (x_0, \xi_0)$ if there exists
$\fy \in \mathcal{D}^{(s)} (X)$ ($\fy \in \mathcal{D}^{\{ s \} } (X)$) such that $\fy (x) = 1$
in a neighborhood of $x_0$ and an open cone
$\Gamma $ which contains $\xi_0$ such that for each $k>0$ (for some $k>0$) there
is a $C> 0$ such that
\begin{equation} \label{Rodino wf set}
|\widehat{ \fy f}(\xi )| \leq C e^{-k|\xi|^{1/s}}, \quad \xi \in \Gamma.
\end{equation}
The $(s)$-wave-front set ($\{ s \}$-wave-front set) of $f$, $ \WF_{(s)} (f) $ ($ \WF_{\{ s \}} (f) $) is defined
as the complement in $X \times \rr d \setminus 0 $ of the set of all $ (x_0, \xi_0) $ where $f$ is
$(s)$-micro-regular ($\{ s \} $-micro-regular), cf. \cite[Definition 1.7.1]{R}.

\par

The $\{ s\}$-wave-front set  $ \WF_{\{ s\} } (f) $ can be found in \cite{P} and agrees
with certain wave-front set $ \WF_L (f) $ introduced in \cite[Chapter 8.4]{Ho1}.


\par

\begin{rem}
Let $s>1$, $f\in (\cD^{\{s\}})'(\rr d)$, $\fy\in \mathcal{E}^{\{s\}}(\rr d)$ and $\fy_0 \in \cD^{(s)}(X)$
be such that $\fy(x)=1$ in a neighborhood $\supp \fy _0$.
Also let $\Gamma_0, \Gamma$ be open cones such that $\overline{\Gamma_0}\subseteq
\Gamma$. If \eqref{Rodino wf set} holds for some $k,C>0$, then it follows
by straight-forward computations, using similar arguments as in the proof of
Theorem \ref{wavefrontprop11} that
\eqref{Rodino wf set} is still true for some $k,C>0$ after $\fy$ has been replaced by
$\fy_0$. Hence it follows that the following conditions are equivalent:
\begin{enumerate}
\item $(x_0,\xi_0) \not\in \WF_{\{Ês\}Ê}(f)$;
\item for some $\fy\in \cD^{ \{ s \} }(X)$, some conical neighborhood $\Gamma$ of $\xi$
such that $\fy(x_0)=1$ in a neighborhood of $x_0$ and some $C,k>0$, it follows that
\eqref{Rodino wf set} holds;

\vrum

\item for some $\fy\in \cD^{(s)}(X)$, some conical neighborhood $\Gamma$ of $\xi$ such
that $\fy(x_0)=1$ in a neighborhood of $x_0$ and some $C,k>0$, it follows that
\eqref{Rodino wf set} holds.
\end{enumerate}

\par

Consequently we may always choose $\fy$ in $\cD^{(s)}(X)$ in the definition of $\WF_{\{Ês\}Ê}(f)$.
\end{rem}

\par

In most of our considerations we are concerned with $\{ s\}$-micro-regularity. For this reason we set
$\WF _s(f) =\WF _{\{ s\} }(f)$ when $f\in (\cD ^{(s)})'$.

\par

\begin{prop} \label{s-WF set}
Let $q \in [1,\infty],$ $ s>1$, and let $ \omega_\ep (\xi) \equiv e^{k |\xi|^{1/s}} $ for
$ \xi \in \rr d$ and  $k>0$. If $f \in (\mathcal{D}^{(s)})' (\rr d)$ then
\begin{equation}
\bigcap_{k >0} \WF _{\mathscr FL^q _{(\omega_k )}} (f) =  \WF_{s} (f).
\label{intersection}
\end{equation}
\end{prop}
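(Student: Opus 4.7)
The statement is a two-sided inclusion, and the two directions are of very different depth.

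The inclusion $\bigcap_{k>0} \WF_{\mathscr{F}L^q_{(\omega_k)}}(f) \supseteq \WF_s(f)$ is the easy half. Suppose $(x_0,\xi_0)\notin \WF_s(f)$; then there exist $\fy \in \cD^{(s)}(\rr d)$ with $\fy(x_0)\ne 0$, an open cone $\Gamma \ni \xi_0$, and constants $k,C>0$ such that $|\widehat{\fy f}(\xi)|\le C e^{-k|\xi|^{1/s}}$ on $\Gamma$. Taking the weight $\omega_{k/2}$ then gives $|\widehat{\fy f}(\xi)\omega_{k/2}(\xi)|\le Ce^{-(k/2)|\xi|^{1/s}}$ on $\Gamma$, which is in $L^q(\Gamma)$ for every $q\in[1,\infty]$. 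Hence $(x_0,\xi_0)\notin \WF_{\mathscr{F}L^q_{(\omega_{k/2})}}(f)$, so it cannot belong to the intersection.

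The substantial half is the reverse inclusion: assuming that for \emph{some} $k>0$ we have $(x_0,\xi_0)\notin \WF_{\mathscr{F}L^q_{(\omega_k)}}(f)$, we must produce the pointwise Roumieu-type decay of \eqref{Rodino wf set}. Pick $\fy\in\cD^{(s)}(X)$ and an open cone $\Gamma\ni \xi_0$ witnessing finiteness of $|f|_{\mathcal B(\Gamma)}$ with $\omega=\omega_k$, and arrange that $\fy=1$ on a neighborhood of $x_0$. Choose a second cutoff $\fy_0\in\cD^{(s)}(X)$ with $\fy_0(x_0)\ne 0$ and $\supp\fy_0$ contained in $\{\fy=1\}$, so that $\fy_0 f = \fy_0(\fy f)$. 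Writing $u=\fy f\in(\mathcal E^{(s)})'(\rr d)$ we then have $\widehat{\fy_0 f} = (2\pi)^{-d/2}\widehat{\fy_0}*\widehat u$. Pick open cones with $\overline{\Gamma_0}\subseteq \Gamma_1$ and $\overline{\Gamma_1}\subseteq \Gamma$, all containing $\xi_0$, and for $\xi\in\Gamma_0$ split the convolution into the integrals over $\Gamma_1$ and over $\complement\Gamma_1$.

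On $\complement\Gamma_1$ the argument is almost verbatim that of Theorem \ref{wavefrontprop11}: because $\fy_0\in\cD^{(s)}$, we have the Beurling-type bound $|\widehat{\fy_0}(\zeta)|\lesssim_N e^{-N|\zeta|^{1/s}}$ for every $N>0$, and $|\widehat u(\eta)|\lesssim e^{K|\eta|^{1/s}}$ for some $K>0$ since $u$ has compact support. Combined with the geometric estimate \eqref{estimate1}, which applies since $\overline{\Gamma_0}\subseteq\Gamma_1$, this yields a bound $\lesssim e^{-k'|\xi|^{1/s}}$ on the off-cone contribution for any preassigned $k'>0$ once $N$ is chosen large. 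On $\Gamma_1$ we must convert the $L^q$ weighted integrability of $\widehat u$ into pointwise decay of the convolution, which is the main obstacle. Apply H\"older's inequality with conjugate exponent $q'$ to write
\[
\int_{\Gamma_1}|\widehat{\fy_0}(\xi-\eta)||\widehat u(\eta)|\,d\eta \le \Big(\int_{\Gamma_1}|\widehat{\fy_0}(\xi-\eta)|^{q'}e^{-kq'|\eta|^{1/s}}\,d\eta\Big)^{1/q'}\nmm{\widehat u\,\omega_k}_{L^q(\Gamma_1)},
\]
the second factor being finite by hypothesis (the cases $q=1$ and $q=\infty$ are handled analogously, using $L^\infty$ or $L^1$ in place of the first factor). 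To extract exponential decay in $\xi$ from the first factor, exploit the subadditivity $|\xi|^{1/s}\le |\xi-\eta|^{1/s}+|\eta|^{1/s}$, valid because $s>1$. Using $|\widehat{\fy_0}(\xi-\eta)|\lesssim_N e^{-N|\xi-\eta|^{1/s}}$ and splitting the exponent as $(Nq'|\xi-\eta|^{1/s}+kq'|\eta|^{1/s}) = \tfrac12(\cdots)+\tfrac12(\cdots)$, the first half bounds $\tfrac12\min(Nq',kq')\,|\xi|^{1/s}$ while the second half leaves an $\eta$-integrable tail $e^{-(kq'/2)|\eta|^{1/s}}$. This yields decay $\lesssim e^{-k'|\xi|^{1/s}}$ for some $k'>0$, independent of $\xi\in\Gamma_0$. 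Combining the two contributions gives $|\widehat{\fy_0 f}(\xi)|\le Ce^{-k'|\xi|^{1/s}}$ on $\Gamma_0$, so $(x_0,\xi_0)\notin \WF_s(f)$, as required.
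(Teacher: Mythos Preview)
Your two arguments are both correct, but the inclusion labels are reversed: the argument you call ``the easy half'' (starting from $(x_0,\xi_0)\notin\WF_s(f)$ and concluding that the point lies outside the intersection) in fact proves $\bigcap_{k}\WF_{\mathscr{F}L^q_{(\omega_k)}}(f)\subseteq\WF_s(f)$, while your ``substantial half'' proves the reverse inclusion $\WF_s(f)\subseteq\bigcap_{k}\WF_{\mathscr{F}L^q_{(\omega_k)}}(f)$. This is a purely cosmetic slip; both directions are established.

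Your route differs from the paper's. The paper first observes that for $q=\infty$ the identity is immediate from the definitions, and then shows that the intersection $\bigcap_{k}\WF_{\mathscr{F}L^q_{(\omega_k)}}(f)$ is independent of $q\in[1,\infty]$: one inclusion is the monotonicity in $q$ already established in Theorem~\ref{wavefrontprop11}, and the other is the one-line estimate
\[
\int_\Gamma|\widehat{\fy f}(\xi)|^qe^{q(k-\ep)|\xi|^{1/s}}\,d\xi
\le \sup_{\xi\in\Gamma}\big(|\widehat{\fy f}(\xi)|^qe^{qk|\xi|^{1/s}}\big)\int_\Gamma e^{-q\ep|\xi|^{1/s}}\,d\xi<\infty .
\]
You instead bypass the reduction to $q=\infty$ and convert the $L^q$ bound directly into pointwise exponential decay by introducing a second cutoff $\fy_0$ supported in $\{\fy=1\}$, writing $\widehat{\fy_0 f}=(2\pi)^{-d/2}\widehat{\fy_0}*\widehat u$, and estimating the on-cone piece with H\"older's inequality together with the subadditivity $|\xi|^{1/s}\le|\xi-\eta|^{1/s}+|\eta|^{1/s}$. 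This is longer and effectively reproves a special case of the cutoff-stability argument behind Theorem~\ref{wavefrontprop11}, but it is self-contained and makes explicit how the exponent $k'$ in the final pointwise bound depends on the initial $k$. One small point to tighten: the step ``arrange that $\fy=1$ on a neighbourhood of $x_0$'' tacitly uses Theorem~\ref{wavefrontprop11} (replace $\fy$ by $\psi\fy$ with $\psi\in\cD^{(s)}$ chosen so that $\psi\fy=1$ near $x_0$), so you may as well invoke that theorem once more and shorten the on-cone estimate, as the paper does.
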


\par

\begin{proof}
Recall that when $ k $ is fixed, the set $ \WF _{\mathscr FL^q _{(\omega_k )}} (f) $ is defined via
$\fy \in \mathcal{D}^{(s)} (X)$, cf. Definition \ref{wave-frontdef1}.

\par

Therefore the set $ \bigcap_{k>0} \WF _{\mathscr FL^q _{(\omega_k )}} (f) $ is the complement of the set of points
$ (x_0, \xi_0) \in \rr {2d}$ for which there exists $k>0$,
$\fy \in \mathcal{D}^{(s)} (X)$ such that $\fy (x) = 1$ in a neighborhood of $x_0$ and an open cone $ \Gamma $
which contains $\xi_0$ such that
\begin{equation}
\Big ( \int _{\Gamma} |\widehat{\fy f}(\xi ) e^{k|\xi|^{1/s}}|^{q}\, d\xi \Big )^{1/q} < \infty.
\label{regularpoints}
\end{equation}

The assertion is obviously true when $q = \infty $.


\par

Therefore, let $ q\in [1,\infty) $ and assume that
$ (x_0,\xi_0)\not\in \WF _{\mathscr FL^\infty _{(\omega_k)}}  (f)
$ for some $k>0$. Then for any $ \varepsilon >0 $ such that $ k -
\varepsilon >0$ we have
$$
\int _{\Gamma} |\widehat{ \fy f}(\xi )\omega_{k-\varepsilon} (\xi )|^{q}\, d\xi
\leq
\sup_{\xi \in \Gamma}\big ( |\widehat{ \fy f}(\xi )|^q e^{kq | \xi|^{1/s}} \big )
\int _{\Gamma} e^{-\varepsilon   |\xi|^{1/s}} \, d\xi < \infty,
$$
which means that 
$$
 (x_0,\xi_0)\not\in  \bigcap_{k>0} \WF _{\mathscr FL^q _{(\omega_k)}}  (f) 
 $$
when 
$$
 (x_0,\xi_0)\not\in  \bigcap_{k>0} \WF _{\mathscr FL^\infty _{(\omega_k)}}  (f). 
 $$


\par

On the other hand, since the wave-front $\WF _{\mathscr FL^q_{(\omega )}} (f) $
is decreasing with respect to the parameter $q$, see Theorem \ref{wavefrontprop11},
we have
$$
 \bigcap_{k>0}  \WF _{\mathscr FL^\infty _{(\omega_{k })}}(f) \subseteq
 \bigcap_{k>0}  \WF _{\mathscr FL^q _{(\omega_{k})}}(f), \quad  q \in [1,\infty].
$$

\par
%
%
%

This completes the proof.
\end{proof}

\par

\section{Invariance properties of wave-front sets with respect to modulation spaces} \label{sec3}

\par

In this section we define wave-front sets with respect to modulation
spaces, and show that they coincide with wave-front sets of Fourier
Lebesgue types.

\par

\subsection{Modulation spaces} \label{subsec3}

\par

In this subsection we consider properties of
modulation spaces which will be used in microlocal analysis of ultradistributions.

\par

%
Let $ s>1$. For a fixed non-zero window   $\phi \in \cS ^{\{s\}} (\rr d )$
($\phi \in \cS ^{(s)} (\rr d )$ respectively) the \stft\ (STFT) of $f \in
 \cS ^{\{s\}} (\rr d ) $ ($f \in \cS ^{(s)} (\rr d )$ respectively)
 with respect to the window $\phi$ is given by
\begin{equation}
   \label{eqi2}
   V_\phi f(x,\xi)=
 (2\pi)^{-d/2} \int_{\rr d} f(y)\, {\overline {\phi(y-x)}} \, e^{-i\eabs{\xi,y}}\,dy\, ,
 \end{equation}

%
%

\begin{rem}
Throughout this section we consider only the case $ {\mathcal S}^{\{s\}} (\rr d ) $
and remark that the analogous assertions hold when
$ {\mathcal S}^{\{s\}} (\rr d ) $ is replaced by $ {\mathcal S}^{(s)} (\rr d )$.
\end{rem}

The map $(f,\phi )\mapsto V_\phi f$ from
$\mathcal{S} ^{\{s\}}(\rr d)\times \mathcal{S} ^{\{s\}}(\rr d)$ to
$\mathcal{S} ^{\{s\}}(\rr {2d})$
extends uniquely to a continuous mapping from
$(\mathcal{S} ^{\{s\}})'(\rr d)\times (\mathcal{S} ^{\{s\}})'(\rr d)$
to $(\mathcal{S} ^{\{s\}})'(\rr {2d})$ by duality.

\par

Moreover, for a fixed $\phi \in {\mathcal S}^{\{s\}} (\rr d )\setminus 0$,
$s\geq 1$, the following characterization of
$ {\mathcal S}^{\{s\}} (\rr d )$
holds:
\begin{equation}
f \in  {\mathcal S}^{\{s\}} (\rr d ) \quad  \Longleftrightarrow \quad
 V_\phi f  \in {\mathcal S}^{\{s\}} (\rr {2d} ).  \label{stft in s}
\end{equation}

\par

We refer to \cite{GZ, T2} for the proof and more details on STFT  in Gelfand-Shilov spaces.

\par
%
%
%

Now we recall the definition of modulation spaces. Let $s>1$, $\omega
\in \MRs (\rr {2d})$, $p,q\in
[1,\infty ]$, and the window $\phi \in \cS ^{(s)} (\rr d )\setminus 0$ be
fixed. Then the \emph{modulation space} $M^{p,q}_{(\omega )}(\rr d)$ is the set of all
ultra-distributions $f\in (\cS ^{\{s\}})' (\rr d) $  such that
\begin{equation}\label{modnorm}
\nm f{M^{p,q}_{(\omega )}} = \nm f{M^{p,q,\phi }_{(\omega )}} \equiv
\nm {V_\phi f\, \omega}{L^{p,q}_1}<\infty .
\end{equation}
Here $\nm \cdo {L^{p,q}_1}$ is the norm given by
\[
\nm F{L^{p,q}_1}\equiv \Big ( \int _{\rr {d}} \Big ( \int _{\rr
{d}}|F(x,\xi )|^p\, dx \Big )^{q/p} \, d\xi \Big
)^{1/q},
\]
when $F\in L^1_{loc}(\rr {2d})$ (with obvious interpretation when
$p=\infty$ or $q=\infty$). Furthermore, the modulation space
$W^{p,q}_{(\omega )}(\rr d)$ consists of all
 $f\in (\cS ^{\{s\}})' (\rr d) $
such that
\begin{equation*}
\nm f{W^{p,q}_{(\omega )}} = \nm f{W^{p,q,\phi }_{(\omega )}} \equiv
\nm {V_\phi f\, \omega}{L^{p,q}_2}<\infty ,
\end{equation*}
where $\nm \cdo {L^{p,q}_2}$ is the norm given by
\[
\nm F{L^{p,q}_2}\equiv \Big ( \int _{\rr {d}} \Big ( \int _{\rr
{d}}|F(x,\xi )|^q\, d\xi \Big )^{p/q} \, dx \Big )^{1/p},
\]
when $F\in L^1_{loc}(\rr {2d})$.

\par

If $\omega =1$, then the notations $M^{p,q}$ and $W^{p,q}$
are used instead of $M^{p,q}_{(\omega )}$ and $W^{p,q}_{(\omega )}$
respectively. Moreover we set $M^p_{(\omega )}=W^p_{(\omega )} =
M^{p,p}_{(\omega )}$ and $M^p=W^p = M^{p,p}$.

\par

We note that $M^{p,q}$ are modulation spaces of classical forms, while
$W^{p,q}$ are classical forms of Wiener amalgam spaces. We refer to
\cite{Feichtinger6} for the most updated definition of modulation
spaces.

\par


\par

If $ s>1 $, $p,q\in [1,\infty]$ and $\omega \in \MRs(\rr {2d})$, then one can show that
the spaces $\mathscr FL^q_{(\omega)}(\rr d)$,
$M^{p,q}_{(\omega )}(\rr d)$ and $W^{p,q}_{(\omega )}(\rr d)$ are locally the
same, in the sense that
\begin{multline*}
\mathscr FL^q_{(\omega )}(\rr d) \ttbigcap (\mathcal E^{\{s\}}) '(\rr d) =
M^{p,q}_{(\omega )}(\rr d) \ttbigcap (\mathcal E^{\{s\}})' (\rr d)\\[1 ex] =
W^{p,q}_{(\omega )}(\rr d) \ttbigcap (\mathcal E^{\{s\}})'(\rr d).
\end{multline*}
This follows by similar arguments as in  \cite{RSTT} (and replacing the space of polynomially
moderated weights  $ \mathscr P (\rr {2d})$  with $ \MRs(\rr {2d})$).
Later on we extend these properties in the context of
wave-front sets and recover the equalities above.

\par

The next proposition concerns topological questions of modulation spaces, and properties of the adjoint of the short-time Fourier transform.

Let $ s>1 $, $\omega \in \MRs(\rr {2d})$,
$ \phi \in  \mathcal{S} ^{(s)} \setminus 0$ and
$ F (x,\xi) \in L^{p,q} _{(\omega)} (\rr  {2d})$. Then $ V^* _\phi F $ is defined by the formula
\[
\langle V^* _\phi F, f \rangle  \equiv \langle F, V_\phi f \rangle, \qquad f\in \cS^{\{t\}} (\rr d).
\]
In what follows we let $L^{p,q}_{(\omega)}(\Omega)$, where $\Omega \subseteq \rr d$, be the set of all $F\in L_{loc}^1(\Omega)$ such that 
\[\nm{F}{L^{p,q}_{(\omega)}}\equiv\nm{F\cdot\omega \chi_{\Omega}}{L^{p,q}},\]
is finite, where $\chi_{\Omega}$ is the characteristic function of $\Omega$.

\par

\begin{prop} \label{emjedanve} {\rm \cite{CPRT1}}
Let $ s>1 $, $\omega \in \MRs(\rr {2d})$, $p,q\in [1,\infty]$, and
$ \phi , \phi _1\in  \mathcal{S} ^{(s)} (\rr d)$, with $(\phi , \phi _1)_{L^2}\neq 0$.
Then the following is true:
\begin{enumerate}
\item the operator $ V^* _\phi$ from $\mathcal{S} ^{(s)}(\rr {2d})$
to $\mathcal{S} ^{(s)}(\rr d)$ extends uniquely to a continuous operator from $L^{p,q} _{(\omega)} (\rr {2d})$ to $M^{p,q} _{(\omega)} (\rr {d})$, and
\begin{equation}  \label{vstar}
\|  V^* _\phi F  \|_{ M^{p,q} _{(\omega)} } \leq C \| V_{\phi _1} \phi \|_{ L^{1} _{(v)} }   \| F \|_{L^{p,q} _{(\omega)}} \text ;
\end{equation}
%
%

\vrum

\item $\Mopq (\rr d )$ is a  Banach space
whose definition is independent on the choice of window
$ \phi \in \mathcal{S} ^{(s)} \setminus  0 $;

\vrum

\item the set of windows can be extended from $ {\mathcal S} ^{(s)}(\rr d) \setminus  0$ to $M^1 _{(v)}(\rr d) \setminus  0$.
\end{enumerate}
\end{prop}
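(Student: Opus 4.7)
The plan is to reduce all three parts to the standard coorbit-type machinery built on the short-time Fourier transform, carried out in three coordinated steps.

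First, for (1), I would start from the fundamental reproducing identity
\[
V_\phi(V^*_{\phi_1} F)(x,\xi) = \int_{\rr{2d}} F(y,\eta)\, V_\phi(M_\eta T_y \phi_1)(x,\xi)\, dy\, d\eta,
\]
verified initially for $F\in \cS^{(s)}(\rr{2d})$ by testing against $f\in \cS^{(s)}(\rr d)$ and extended by duality. The covariance of the STFT under time-frequency shifts gives $|V_\phi(M_\eta T_y \phi_1)(x,\xi)| = |V_{\phi_1}\phi(x-y,\xi-\eta)|$, whence
\[
|V_\phi(V^*_{\phi_1} F)(x,\xi)| \leq (|F| * |V_{\phi_1}\phi|)(x,\xi).
\]
I would then apply the $v$-moderate mixed-norm Young inequality
\[
\nm{G*H}{L^{p,q}_{(\omega)}} \lesssim \nm{H}{L^{p,q}_{(\omega)}}\, \nm{G}{L^{1}_{(v)}},
\]
itself a consequence of Minkowski together with the pointwise estimate $\omega(z)\lesssim v(z-w)\omega(w)$. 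The characterization \eqref{stft in s} guarantees $V_{\phi_1}\phi \in L^1_{(v)}$ when $\phi,\phi_1\in \cS^{(s)}$, so combining these two estimates yields \eqref{vstar}.

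Next, for (2), window independence would follow by combining (1) with the inversion formula $f = (\phi_1,\phi)_{L^2}^{-1}\, V^*_{\phi_1}V_\phi f$: applying (1) with $F = V_\phi f$ gives
\[
\nm{f}{M^{p,q,\phi_1}_{(\omega)}} \lesssim \nm{V_{\phi_1}\phi}{L^1_{(v)}}\, \nm{f}{M^{p,q,\phi}_{(\omega)}},
\]
and swapping the roles of $\phi$ and $\phi_1$ yields norm equivalence. Completeness is then the routine transfer: a Cauchy sequence $(f_n)$ in $\Mopq$ induces a Cauchy sequence $(V_\phi f_n)$ in the complete space $L^{p,q}_{(\omega)}(\rr{2d})$ with some limit $F$, and $f := (\phi,\phi)_{L^2}^{-1} V^*_\phi F$ is the $\Mopq$-limit of $(f_n)$ by (1). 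For (3), estimate \eqref{vstar} depends on $\phi$ only through $\nm{V_{\phi_1}\phi}{L^1_{(v)}}$, which is, by definition, equivalent to the $M^1_{(v)}$-norm of $\phi$. Hence the whole analysis/synthesis scheme continues to work once $\phi$ is allowed to range in $M^1_{(v)}\setminus 0$; a density argument, using that $\cS^{(s)}$ is dense in $M^1_{(v)}$, completes the extension.

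The main technical obstacle is the weighted mixed-norm Young inequality for weights in $\MRs(\rr{2d})$ of almost exponential growth, together with the verification that $V_{\phi_1}\phi \in L^1_{(v)}$ for all $\phi,\phi_1\in \cS^{(s)}$. The Beurling--Domar condition packaged into $\MRs$ is essential here: it restricts $v$ to at most $Ce^{k|\cdot|^{1/s}}$, which is dominated by the ultra-rapid decay of elements of $\cS^{(s)}$ guaranteed by \eqref{stft in s}, so that $V_{\phi_1}\phi\in L^1_{(v)}(\rr{2d})$ and the entire synthesis/analysis loop remains consistent in the ultradistributional setting.
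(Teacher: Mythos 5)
The paper does not prove this proposition at all: it is quoted from \cite{CPRT1} (and ultimately rests on the coorbit machinery of Gr\"ochenig's book, Ch.~11), so there is no in-paper argument to compare against. Your proposal is precisely that standard argument --- reproducing identity, covariance of the STFT under time-frequency shifts, weighted Young inequality for $v$-moderate $\omega$, the inversion formula for window independence and completeness, and $\|V_{\phi_1}\phi\|_{L^1_{(v)}}=\|\phi\|_{M^1_{(v)}}$ plus density for the window extension --- and it is correct, including the key observation that $V_{\phi_1}\phi\in L^1_{(v)}$ because \eqref{stft in s} gives super-exponential decay of order $1/s$ beating the growth $v\lesssim e^{k|\cdot|^{1/s}}$. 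The only blemish is cosmetic: $|V_\phi(M_\eta T_y\phi_1)(x,\xi)|$ equals $|V_\phi\phi_1(x-y,\xi-\eta)|$, not $|V_{\phi_1}\phi(x-y,\xi-\eta)|$; since $|V_\phi\phi_1(z)|=|V_{\phi_1}\phi(-z)|$ and $v$ may be taken even, this does not affect the estimate.
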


\par

%
%
%
%
%
%
%

%
%
%
%
%
%
%

\subsection{Wave-front sets with respect to modulation spaces}

\par

Next we define wave-front sets with respect to modulation spaces and show that they agree with corresponding wave-front sets of Fourier Lebesgue types. More precisely, we
prove that \cite[Theorem 6.1]{PTT} holds if the weights of polynomial growth
are replaced by more general submultiplicative weights.

\par

Let $s>1$, $\phi \in \mathcal{S} ^{\{s\}} (\rr d) \setminus 0$ ($\phi \in \mathcal{S} ^{(s)} (\rr d) \setminus 0$), $\omega \in
\MRs$, $\Gamma \subseteq \rr d\setminus 0$ be an open
cone and let $p,q\in [1,\infty ]$. For any $f\in ( \mathcal{S} ^{\{s\}} )'(\rr d)$ ($f\in ( \mathcal{S} ^{(s)} )'(\rr d)$)
we set
\begin{multline}\label{modseminorm}
|f|_{\mathcal B(\Gamma )} = |f|_{\mathcal B(\phi ,\Gamma )}
\equiv
\nm{V_\phi f}{L^{p,q}_{(\omega)}(\rr d \times \Gamma)}
\\[1ex]
\text{when}\quad \mathcal B=M^{p,q}_{(\omega )}=M^{p,q}_{(\omega
)}(\rr d).
\end{multline}
We note that
$ |f|_{\mathcal B(\phi ,\Gamma )}$ might attain $+\infty$.
Thus we define by $|\cdo |_{\mathcal B(\Gamma )}$ a semi-norm
on $( \mathcal{S} ^{\{s\}} )' (\rr d)$ (or on $( \mathcal{S} ^{(s)} )' (\rr d)$ ) which might attain the value $+\infty$. If $\Gamma
=\rr d\setminus 0$ and $\phi \in \mathcal{S} ^{\{s\}} (\rr d)$, then $|f|_{\mathcal B(\Gamma )} = \nm
f{M^{p,q}_{(\omega )}}$.

\par

We also set
\begin{multline}\label{modseminorm2}
|f|_{\mathcal B(\Gamma )} = |f|_{\mathcal B(\phi ,\Gamma )}
\equiv
\Big ( \int _{\rr {d}} \Big ( \int _{\Gamma} | V_\phi f(x,\xi )\omega
(x,\xi )|^q\, d\xi \Big )^{p/q}\, dx \Big )^{1/p}
\\[1ex]
\text{when}\quad \mathcal B=W^{p,q}_{(\omega )}=W^{p,q}_{(\omega
)}(\rr d)
\end{multline}
and note that similar properties hold for this semi-norm compared to
\eqref{modseminorm}.

\par

Let $\omega \in \MRs (\rr {2d})$, $p,q\in [1,\infty ]$, $f\in
( \mathcal{S} ^{(s)} )' (\rr d)$, and let
$\mathcal B=M^{p,q}_{(\omega )}$ or $\mathcal B=W^{p,q}_{(\omega
)}$. Then $\Theta _{\mathcal B}(f)$, $\Sigma
_{\mathcal B}(f)$ and the wave-front set $\WF
_{\mathcal B}(f)$ of $f$ with respect to the modulation space
$\mathcal B$ are defined in the same way as in Section
\ref{sec1}, after replacing the semi-norms of Fourier Lebesgue types in
\eqref{skoff1} with the semi-norms in \eqref{modseminorm} or
\eqref{modseminorm2} respectively.

\par

We need the following proposition when proving that the wave-front sets of Fourier-Lebesgue and modulation space types are the same. The result is an extension of  \cite[Proposition 4.2]{CPRT1}.

\par

\begin{prop} \label{stftestimates}
Let $ s>1$. Then the following is true:

\begin{enumerate}
\item if $ f \in (\mathcal E ^{\{s\}})' (\rr d)$ and $\phi \in \cS^{(s)}(\rr d)$, then 
\[  \label{rastiopadanje}
 |V_\phi f(x,\xi)|\lesssim e^{- h|x|^{1/s}} e^{\ep  | \xi |^{1/s}} ,
 \]
for every $h>0$ and every  $\ep >0$;
\item if $ f \in (\mathcal D ^{(s)})' (\rr d)$ and in addition $\phi \in \cD^{(s)}(\rr d)\setminus 0$, then $f\in (\mathcal E ^{\{s\}})' (\rr d)$, if and only if $\supp V_\phi f\subseteq K\times \rr d$ for some compact set $K$ and 
\[|V_\phi f(x,\xi)|\lesssim e^{\ep |\xi|^{1/s}},\]
for every $\ep >0$.
\end{enumerate}
\end{prop}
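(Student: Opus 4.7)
\medskip

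\noindent\textbf{Proof plan.}
For part (1), I would express the STFT as a duality pairing, $V_\phi f(x,\xi)=(2\pi)^{-d/2}\langle f,g_{x,\xi}\rangle$ with $g_{x,\xi}(y)=\overline{\phi(y-x)}e^{-i\scal{\xi}{y}}$, and then use the defining estimate for compactly supported Roumieu ultradistributions: for every compact neighbourhood $K'$ of $\supp f$ and every $A>0$ there is $C_A>0$ with $|\langle f,g\rangle|\le C_A\|g\|_{s,A,K'}$. A Leibniz expansion gives
\[
\big|\partial^\beta g_{x,\xi}(y)\big|\le\sum_{\gamma\le\beta}\binom{\beta}{\gamma}|\xi|^{|\beta-\gamma|}|\phi^{(\gamma)}(y-x)|,
\]
and the defining estimate of $\cS^{(s)}(\rr d)$ furnishes, for every $A,\lambda>0$, a constant such that $|\phi^{(\gamma)}(z)|\lesssim\gamma!^s A^{-|\gamma|}e^{-\lambda|z|^{1/s}}$. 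Substituting and using $\binom{\beta}{\gamma}\gamma!^s\le\beta!^s/(\beta-\gamma)!^s$, the quantity $(A^{|\beta|}/\beta!^s)|\partial^\beta g_{x,\xi}(y)|$ is majorised by $e^{-\lambda|y-x|^{1/s}}\sum_\delta(A|\xi|)^{|\delta|}/\delta!^s$, which by the elementary Gevrey sum $\sum_\delta r^{|\delta|}/\delta!^s\lesssim e^{cr^{1/s}}$ is bounded by $e^{-\lambda|y-x|^{1/s}}e^{c(A|\xi|)^{1/s}}$. Taking $A$ small yields the factor $e^{\ep|\xi|^{1/s}}$; since $|y-x|\ge|x|-\operatorname{diam}K'$ for $y\in K'$, taking $\lambda$ large yields the factor $e^{-h|x|^{1/s}}$.

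The forward direction of (2) is immediate from (1): $V_\phi f(x,\xi)$ vanishes unless $x+\supp\phi$ meets $\supp f$, hence its $x$-support lies in the compact set $\supp f-\supp\phi$, and the sub-exponential $\xi$-growth is (1) with the $x$-decay discarded. The substantive direction is the converse. Here the plan is to use the STFT reconstruction identity
\[
(\phi,\phi)_{L^2}\langle f,\psi\rangle=\langle V_\phi f,V_\phi\psi\rangle,
\]
and prove that $f$ is continuous on each step $\mathcal{E}^s_{A,K'}$ of the inductive limit defining $\mathcal{E}^{\{s\}}(K')$, where $K'\supseteq K+\supp\phi$. For $\psi\in\mathcal{E}^s_{A,K'}$, the compactly supported product $\psi\cdot\overline{T_x\phi}$ lies, by Leibniz combined with the $\mathcal{D}^{(s)}$ bounds for $\phi$, in $\mathcal{D}^s_{A',K'}$ for some $A'$ proportional to $A$ with norm $\lesssim\|\psi\|_{s,A,K'}$; the classical Gevrey Paley--Wiener estimate then yields $|V_\phi\psi(x,\xi)|\lesssim\|\psi\|_{s,A,K'}\,e^{-cA^{1/s}|\xi|^{1/s}}$. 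Combined with the hypothesis $|V_\phi f(x,\xi)|\lesssim e^{\ep|\xi|^{1/s}}$ for arbitrarily small $\ep$ and the compact $x$-support of $V_\phi f$, the choice $\ep<cA^{1/s}$ makes the integrand integrable on $\rr{2d}$, giving $|\langle f,\psi\rangle|\lesssim\|\psi\|_{s,A,K'}$ for every $A>0$, so $f\in(\mathcal{E}^{\{s\}})'(\rr d)$. That $\supp f\subseteq K+\supp\phi$ follows because the reconstruction vanishes outside this set.

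The main technical obstacle is the careful bookkeeping of Gevrey factorials in (1): converting the Leibniz sum over $\gamma\le\beta$ into something compatible with the elementary Gevrey series $\sum r^{|\delta|}/\delta!^s\lesssim e^{cr^{1/s}}$, and then trading the free parameters $A,\lambda$ against the target parameters $\ep,h$ in the conclusion. A secondary difficulty in the converse of (2) is verifying that the STFT reconstruction is not merely formal but genuinely convergent as a distribution pairing; this holds precisely because the compact $x$-support of $V_\phi f$ combines with the $e^{-cA^{1/s}|\xi|^{1/s}}$ decay of $V_\phi\psi$ to produce an absolutely convergent double integral.
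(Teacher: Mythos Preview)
Your proof is correct but takes a different route from the paper in both parts. For (1), the paper avoids the Leibniz--Gevrey combinatorics entirely: it introduces a cutoff $\psi\in\cD^{(s)}(\rr d)$ with $\psi=1$ on $\supp f$ and uses the convolution bound $|V_\phi f(x,\xi)|=|V_\phi(\psi f)(x,\xi)|\lesssim(|V_\psi\phi(x,\cdo)|*|\widehat f\,|)(\xi)$, combining the decay $|V_\psi\phi(x,\xi)|\lesssim e^{-h|x|^{1/s}-2\ep|\xi|^{1/s}}$ (valid since $\phi\in\cS^{(s)}$, $\psi\in\cD^{(s)}$) with the growth $|\widehat f(\xi)|\lesssim e^{\ep|\xi|^{1/s}}$. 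For the converse in (2), the paper does not verify continuity on the seminorm steps directly; instead it integrates $V_\phi f$ in $x$ to recover $\widehat f$ and obtain $|\widehat f(\xi)|\lesssim e^{\ep|\xi|^{1/s}}$ for every $\ep>0$, shows $\supp f$ is compact via the orthogonality relation $(f,\fy)=(V_\phi f,V_\phi\fy)$, and then invokes the Paley--Wiener characterisation of $(\mathcal E^{\{s\}})'$. Your approach through Moyal's identity plus a Paley--Wiener bound on $V_\phi\psi$ is equally valid and more purely time-frequency in spirit; the one caveat is that the pairing $\langle f,\psi\rangle$ for non-compactly-supported $\psi\in\mathcal E^s_{A,K'}$ is only meaningful once $\supp f\subseteq K+\supp\phi$ is established, so that observation should logically precede (or be proved simultaneously with) the continuity estimate rather than follow it.
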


\begin{proof}
In order to prove $(1)$ we assume that $ f \in (\mathcal E ^{\{s\}})' (\rr d)$ and $\phi \in \cS^{(s)}(\rr d)$.
Also let $\psi\in \cD^{(s)}(\rr d)$ such that  $\psi=1$ in $\supp f$. Then for every $\ep, h >0$ it holds that 
\[
|V_\psi \phi(x,\xi)| \lesssim e^{-h |x|^{1/s}-2\ep |\xi |^{1/s}}.
\]
and
\[|\widehat{f}(\xi)|\lesssim e^{\ep |\xi|^{1/s}}.
\]
By straight-forward calculations, it follows that 
\begin{multline*}
|V_{\phi} f(x,\xi)| = |(V_{\phi} (\psi f))(x,\xi)| \lesssim (|V_{\psi}\phi(x,\cdot)|* |\widehat{f}|)(\xi)
\\[1ex]
= \int |V_{\psi}\phi (x,\xi - \eta )||
\widehat{f}(\eta )|\, d\eta \lesssim  \int e^{-h |x|^{1/s}-2\ep |\xi-\eta |^{1/s}} e^{\ep |\eta |^{1/s}}\, d\eta
\\[1ex]
\leq  e^{-h |x|^{1/s}} \int e^{-2\ep |\eta |^{1/s}+2\ep |\xi|^{1/s}+\ep |\eta |^{1/s}}\, d\eta
\end{multline*} 
Now, the assertion follows since both $\ep$ and $h$ can be chosen arbitrarily.

\par

Next we prove $(2)$. First assume that $\phi \in \cD^{(s)}(\rr d)\setminus 0$ and $f\in (\mathcal
E^{\{s\}})'(\rr d)$. Since both $\phi$ and $f$ have compact support, it follows that
$\supp (V_\phi f)\subseteq K\times \rr d$. Furthermore,
\[
|V_\phi f(x,\xi)|\lesssim e^{\ep(|x|^{1/s}+|\xi|^{1/s})}.
\]
Since $V_\phi f(x,\xi)$ has compact support in the $x$-variable, it follows that 
\[
|V_\phi f(x,\xi)|\lesssim e^{\ep|\xi|^{1/s}},
\]
for every $\ep >0$.

\par

In order to prove the opposite direction we assume that $\supp V_\phi f\subseteq K\times \rr d$,
for some compact set $K$, and 
\[|V_\phi f(x,\xi)|\lesssim e^{\ep |\xi|^{1/s}},\]
for every $\ep >0$. Then
\begin{equation}\label{fhatest2}
|\widehat{f}(\xi)|=\Big |\int V_\phi f(x,\xi)\, dx\Big|\lesssim e^{\ep|\xi|^{1/s}},\quad \forall \ep >0.
\end{equation}
Assume that $\supp \phi \subseteq K$ and choose $\fy \in \cD^{(s)}(\rr d)$ such that
$\supp \fy \cap 2K =\emptyset$. Let $x\notin 2K$, then 
\[
(f, \fy)=(V_\phi f, V_\phi \fy)=0,
\]
which implies that $f$ has compact support. Hence, \eqref{fhatest2} and the fact that
$f\in (\cD^{(s)})'(\rr d)$ give $f\in (\mathcal E^{\{s\}})'(\rr d)$.
\end{proof}

\par

\begin{thm}\label{wavefrontsequal}
Let $ s>1 $,  $p,q\in [1,\infty ]$,  $\omega \in \MRs(\rr {2d})$,
$\mathcal B=\mathscr FL^q_{(\omega)}(\rr d)$, and let $\mathcal C=M^{p,q}_{(\omega )}(\rr d)$ or
$\mathcal C=W^{p,q}_{(\omega )}(\rr d)$.
If $f\in (\mathcal{D}^{(s)})' (\rr d) $
then
\begin{equation}\label{WFidentities2}
\WF _{\mathcal B}(f)= \WF _{\mathcal C}(f).
\end{equation}
In particular, $\WF  _{\mathcal C}(f)$ is independent of $p$ and
$\phi \in \mathcal{S}^{(s)}(\rr d)\setminus 0$ in \eqref{modseminorm} and
\eqref{modseminorm2}.
\end{thm}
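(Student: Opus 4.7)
The strategy is to reduce the identity of wave-front sets to a pair of local seminorm comparisons on $g = \fy f$ with $\fy \in \cD^{(s)}(X)$. Such a $g$ belongs to $(\mathcal E^{(s)})'(\rr d)$, hence by the Paley--Wiener-type bound quoted in the proof of Theorem~\ref{wavefrontprop11} satisfies $|\widehat g(\eta)|\omega(0,\eta)\lesssim e^{k|\eta|^{1/s}}$ for some $k>0$; consequently $S_N(g) \equiv \sup_\eta |\widehat g(\eta)|\omega(0,\eta)e^{-N|\eta|^{1/s}}$ is finite for $N$ sufficiently large. I would fix an auxiliary window $\phi\in\cD^{(s)}(\rr d)\setminus 0$ with $\int\overline\phi\neq 0$; by Proposition~\ref{stftestimates}(2), $V_\phi g$ is then supported in a strip $K\times\rr d$ with $K$ compact, and the case of a general window $\phi_0\in\cS^{(s)}(\rr d)\setminus 0$ follows from this distinguished case by Proposition~\ref{emjedanve}(2) via a standard change-of-window argument (possibly shrinking the cone by an arbitrarily small amount).

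With these reductions in place, the theorem follows at once from the two estimates
\begin{equation}\label{pl:main}
|g|_{\mathcal C(\Gamma')}\,\lesssim\,|g|_{\mathcal B(\Gamma)} + S_N(g), \qquad |g|_{\mathcal B(\Gamma')}\,\lesssim\,|g|_{\mathcal C(\Gamma')},
\end{equation}
valid for every pair of open cones with $\overline{\Gamma'}\subseteq \Gamma\cup\{0\}$. The second estimate is the easier one: integration in $x$ of the identity $V_\phi g(x,\xi) = \mathscr F(g\,\overline{T_x\phi})(\xi)$ over the compact set $K$ yields $\widehat g(\xi) = C\int_K V_\phi g(x,\xi)\,dx$, and then H\"older in $x$ together with the $v$-moderation of $\omega$ (uniform for $x\in K$) handles the $M^{p,q}_{(\omega)}$ case directly, while the $W^{p,q}_{(\omega)}$ case requires only an additional application of Minkowski's integral inequality. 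No cone shrinking is needed.

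The first estimate in~\eqref{pl:main} is the heart of the argument. From the pointwise bound $|V_\phi g(x,\xi)|\lesssim\int|\widehat g(\eta)||\widehat\phi(\xi-\eta)|\,d\eta$, combined with the compact $x$-support of $V_\phi g$ and the moderation $\omega(x,\xi)\lesssim v(x)v(\xi-\eta)\omega(0,\eta)$, one is reduced to estimating the convolution $(H*\Psi)(\xi)$ on $\Gamma'$, where $H(\eta) = |\widehat g(\eta)|\omega(0,\eta)$ and $\Psi(\zeta) = |\widehat\phi(\zeta)|v(\zeta)$. Because $\phi\in\cD^{(s)}$, the Paley--Wiener estimate gives $|\widehat\phi(\zeta)|\lesssim e^{-N|\zeta|^{1/s}}$ for every $N>0$, while $v(\zeta)\lesssim e^{k|\zeta|^{1/s}}$, so $\Psi$ decays faster than any $e^{-N|\zeta|^{1/s}}$. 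Split $H = H\chi_\Gamma + H\chi_{\complement\Gamma}$. Young's inequality controls the first piece by $\|\Psi\|_{L^1}|g|_{\mathcal B(\Gamma)}$, and the cone estimate $|\xi-\eta|^{1/s}\gtrsim |\xi|^{1/s}+|\eta|^{1/s}$ (valid for $\xi\in\Gamma'$, $\eta\notin\Gamma$, exactly as in the proof of Theorem~\ref{wavefrontprop11}) combined with the rapid decay of $\Psi$ bounds the second piece by a multiple of $S_N(g)$. Taking the appropriate $L^{p,q}$-norm in $(x,\xi)$ then proves~\eqref{pl:main} for $\mathcal C = M^{p,q}_{(\omega)}$ and, with the order of integration swapped, for $\mathcal C = W^{p,q}_{(\omega)}$.

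\textbf{Main obstacle.} The technical crux is the off-cone estimate, where the almost-exponential growth of both $v$ and $|\widehat g|$ on $\complement\Gamma$ must be simultaneously absorbed by the decay of $|\widehat\phi|$. In the polynomial-weight setting of \cite[Theorem~6.1]{PTT} the Schwartz decay of $\widehat\phi$ suffices; here it is essential that $\phi\in\cD^{(s)}$ (available thanks to the Beurling--Domar condition on $v$), so that $|\widehat\phi(\zeta)|\lesssim e^{-N|\zeta|^{1/s}}$ for every $N>0$, which is precisely what is needed to dominate $v$ and the remainder $S_N(g)$ at once.
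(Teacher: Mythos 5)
Your proposal is correct and follows essentially the same route as the paper: the pointwise bound $|V_\phi g(x,\xi)|\lesssim(|\widehat g|*|\widehat\phi\,|)(\xi)$, Young's inequality on the cone, the separation estimate $|\xi-\eta|^{1/s}\gtrsim|\xi|^{1/s}+|\eta|^{1/s}$ off the cone (absorbing $v$ and the Paley--Wiener growth of $\widehat g$ by the superexponential decay of $\widehat\phi$ for $\phi\in\cD^{(s)}$), the reproducing identity $\int V_\phi g(x,\xi)\,dx=c\,\widehat g(\xi)$ for the reverse inequality, and the compact $x$-support of $V_\phi g$ to make all $p$ equivalent (the paper achieves the latter via H\"older between $p_0=1$, $p=\infty$ and general $p$). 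The one thin spot is your treatment of window-independence: Proposition \ref{emjedanve}(2) only gives equivalence of the \emph{global} $M^{p,q}_{(\omega)}$-norms, whereas the theorem requires independence of the cone-localized seminorms \eqref{modseminorm}; the paper regards this as the main part of the proof and carries it out via the estimate $|V_\phi f|\,\omega\lesssim(F_1+F_2)*G$ with $G=|V_\phi\phi_1|\,v$, splitting again into an on-cone Young term and an off-cone term controlled by \eqref{estimate1} and \eqref{stft in s}. Since that argument is of exactly the same type as the one you do spell out, your sketch is recoverable, but the citation as given does not cover the step.
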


\par

In the proof of Theorem \ref{wavefrontsequal}, the main
part concerns proving that the wave-front sets of modulation types
are independent of the choice of window $\phi \in \cS^{(s)} (\rr d)\setminus 0$.
Note also that the dual pairing between  $f\in (\mathcal{S}^{\{s\}})' (\rr d) $ and
 $\phi \in \cS^{(s)} (\rr d) $ is well defined.

\par

\begin{proof} We only consider the case $\mathcal C = M^{p,q}_{(\omega )}$.
The case $\mathcal C = W^{p,q}_{(\omega )}$ follows by similar arguments and is
left for the reader. We may assume that $f\in (\mathcal E ^{\{s\}})' (\rr d)$
and that $\omega (x,\xi )=\omega (\xi )$ since the statements only concern local assertions.

\par

In order to prove that $\WF _{\mathcal C}(f)$ is independent of
$\phi \in \mathcal{S}^{(s)}(\rr d)\setminus 0$, we assume that $\phi ,\phi _1\in
 \mathcal{S}^{(s)}(\rr d)\setminus 0$ and let $|\cdo |_{\mathcal C_1(\Gamma )}$ be
the semi-norm in \eqref{modseminorm} after $\phi$ has been replaced by
$\phi _1$. Let $\Gamma _1$ and $\Gamma _2$ be open cones in $\rr d$
such that $\overline {\Gamma _2}\subseteq \Gamma _1$. The asserted
independency of $\phi$ follows if we prove that
\begin{equation}\label{est2.6}
|f|_{\mathcal C(\Gamma _2)} \le C(|f|_{\mathcal C_1(\Gamma _1)}+1),
\end{equation}
for some positive constant $C$.
%
%
%
Let
\[
\Omega _1=\sets {(x,\xi )}{\xi \in \Gamma _1}\subseteq \rr {2d}\quad
\text{and}\quad \Omega
_2=\complement \Omega _1\subseteq \rr {2d},
\]
with characteristic functions $\chi _1$ and $\chi _2$ respectively,
and set
$$
F_k(x,\xi )=|V_{\phi _1}f(x,\xi )|\omega (\xi )\chi _k(x,\xi ), \quad 
k=1,2,
$$ and $  G=|V_\phi \phi _1(x,\xi )|v(\xi )$.
Since $\omega$ is $v$-moderate, it follows from \cite[Lemma 11.3.3]{Gro-book} that
\[
|V_\phi f(x,\xi )\omega (x,\xi )|\lesssim \big ( (F_1+F_2)*G\big )(x,\xi),
\]
which implies that
\begin{equation}\label{fJ1J2igen}
|f|_{\mathcal C(\Gamma _2)} \lesssim J_1+J_2,
\end{equation}
where
\[
J_k = \Big (\int _{\Gamma _2} \Big (\int_{\rr d} |(F_k*G)(x,\xi )|^p\, dx\Big
)^{q/p}\, d\xi \Big )^{1/q}, \quad k=1,2.
\]

\par

By Young's inequality
\begin{equation}\label{J1estimateB}
J_1\le \nm {F_1*G}{L^{p,q}_1}\le \nm G{L^1}\nm {F_1}{L^{p,q}_1}
=C|f|_{\mathcal C_1(\Gamma _1)},
\end{equation}
where $C=\nm G{L^1} = \nm {V_\phi \phi _1(x,\xi )v(\xi )}{L^1}<\infty$, in view of \eqref{stft in s}.

\par

Next we consider $J_2$.
%
%
%
For $\xi\in \Gamma_2$ and $\eta\in \complement \Gamma_1$, it follows from \eqref{estimate1},
Propositon \ref{stftestimates} 
and \eqref{stft in s} that for every $N, l>0$ and for some $k>0$ we may choose
$h>0$ such that
\begin{multline*}
|(F_2 * G)(x,\xi )| \lesssim  \iint _{\rr {2d}}
e^{-N|y|^{1/s}} e^{(l+k)| \eta |^{1/s}}  e^{- h(|x-y|^{1/s} + |\xi - \eta|^{1/s})} v(\xi-\eta) \, dy d\eta
\\[1ex]\lesssim \iint _{\rr {2d}}
e^{-N|y|^{1/s}} e^{(l+k)| \eta |^{1/s}}
e^{- h c(|x|^{1/s} +  |y|^{1/s} + |\xi|^{1/s} + | \eta|^{1/s})/2}
e^{k(|\xi|^{1/s} + |\eta|^{1/s} )} \, dy d\eta
\\[1 ex]
\lesssim   e^{ -hc|x|^{1/s}/2}  e^{(k -\frac{hc}{2})|\xi|^{1/s}}
\iint _{\rr {2d}}  e^{-N|y|^{1/s} -hc |y|^{1/s}/2}
e^{(l +2k -hc/2)|\eta|^{1/s} } \, dy d\eta,
\\[1 ex]
\lesssim    e^{ -hc|x|^{1/s}/2}  e^{(k -hc/2)|\xi|^{1/s}} < \infty.
\end{multline*}
Therefore
\begin{multline*}
J_2  = \Big (\int _{\Gamma _2} \Big (\int_{\rr d} |(F_2 * G)(x,\xi )|^p\, dx\Big )^{q/p}\, d\xi \Big )^{1/q}
\\[1 ex]
\lesssim
 \Big (\int _{\Gamma _2} \Big (\int_{\rr d}
 e^{ -hc|x|^{1/s}/2}  e^{(k -hc/2)|\xi|^{1/s}}
 \Big)^p\, dx\Big )^{q/p}\, d\xi \Big )^{1/q}<\infty.
\end{multline*}
This proves that \eqref{est2.6}, and hence $\WF _{\mathcal C}(f)$ is
independent of $\phi \in \mathcal{S}^{(s)}(\rr d)\setminus 0$.

\medspace

In order to prove \eqref{WFidentities2} we assume from now on that
$\phi$ in \eqref{modseminorm} is real-valued and has compact support. Let
$p_0\in [1,\infty ]$ be such
that $p_0\le p$ and set $\mathcal C_0
=M^{p_0,q}_{(\omega )}$. The result follows if we prove
\begin{gather}
\Theta _{\mathcal C_0}(f)\subseteq \Theta _{\mathcal B}(f)\subseteq
\Theta _{\mathcal C}(f)\quad \text{when}\ p_0=1,\
p=\infty ,\label{Thetaest1}
\intertext{and}
\Theta _{\mathcal C}(f)\subseteq \Theta _{\mathcal
C_0}(f).\label{Thetaest2}
\end{gather}

\par
%
%

The proof of the first inclusion in \eqref{Thetaest1} follows from the estimates
\begin{multline*}
|f|_{\mathcal B(\Gamma )} \lesssim
\Big ( \int _{\Gamma} |\widehat f(\xi )\omega (\xi )|^{q}\, d\xi
\Big )^{1/q}
\\[1ex]
 \lesssim \Big ( \int _{\Gamma} |\mathscr F \Big ( f \int _{\rr {d}}\phi
(\cdot - x)\,  dx\Big ) (\xi ) \omega (\xi )|^{q}\, d\xi \Big
)^{1/q}
\\[1ex]
 \lesssim \Big ( \int _{\Gamma} \Big (  \int _{\rr {d}} |\mathscr F (
f \phi (\cdot -x)) (\xi ) \omega (\xi )|\,  dx \Big )^{q}\, d\xi \Big
)^{1/q}
\\[1ex]
=\Big ( \int _{\Gamma} \Big (  \int _{\rr {d}} |V_{\phi}
f(x,\xi )\omega (\xi )|\,  dx \Big )^{q} \, d\xi \Big )^{1/q} = C
|f|_{\mathcal C_0(\Gamma )},
\end{multline*}
for some positive constant $C$.
\par

Next we prove the second inclusion in \eqref{Thetaest1}. We have
\begin{multline*}
|f|_{\mathcal C(\Gamma _2)} = \Big ( \int _{\Gamma _2}
\sup_{x \in \rr d} | V_\phi f(x,\xi )\omega (x,\xi ) |^{q}\, d\xi \Big
)^{1/q}
\\[1ex]
\lesssim \Big ( \int _{\Gamma _2} \sup_{x \in  \rr d}
|( |\widehat f| * | \mathscr F (\phi (\cdot - x)) | ) (\xi) \omega
(\xi )|^{q}\, d\xi \Big )^{1/q}
\\[1ex]
\lesssim \Big ( \int _{\Gamma _2}
|( |\widehat f| * | \widehat \phi  | ) (\xi) \omega (\xi )|^{q}\,
d\xi \Big )^{1/q}
\\[1ex]
\lesssim  \Big ( \int _{\Gamma _2} \big ( ( |\widehat f \cdot  \omega
| * |\widehat \phi \cdot v| ) (\xi) \big )^{q} \, d\xi \Big )^{1/q},
\end{multline*}
where $\phi\in\mathcal D^{(s)}(X)$ is chosen such that $\phi=1$ in $\supp f$. The second inclusion in \eqref{Thetaest1} now follows by straight-forward computations, using similar arguments as in the proof of \eqref{cuttoff1}. The details are left for the reader.


\par

It remains to prove \eqref{Thetaest2}. Let $K\subseteq \rr d$ be compact and chosen such that $V_\phi f(x,\xi)=0$ outside K, and let $p_1\in [1,\infty]$ be chosen such that $1/p_1 + 1/p_0=1+1/p$. By
H{\"o}lder's inequality we get
\begin{multline*}
|f|_{\mathcal C_0(\Gamma )} = \Big ( \int _{\Gamma} \Big
( \int _{\rr {d}} | V_\phi f(x,\xi )\omega (x,\xi )|^{p_0}\, dx\Big
)^{q/p_0}\, d\xi \Big )^{1/q}
\\[1ex]
\le C_K\Big ( \int _{\Gamma} \Big (
\int _{\rr {d}} | V_\phi f(x,\xi )\omega (x,\xi )|^{p}\, dx\Big
)^{q/p}\, d\xi \Big )^{1/q} = C_K|f|_{\mathcal C(\Gamma )}.
\end{multline*}
This gives \eqref{Thetaest2}, and the proof is complete.

\end{proof}

\par

\begin{cor}
Let $s>1$, $p,q\in [1,\infty ]$, and $\omega \in \MRs(\rr {2d})$.
If  
$f\in  (\mathcal{S}^{\{s\}})' (\rr d)$ is compactly supported,
then
\[
f \in \mathcal B \quad
\Longleftrightarrow \quad
\WF _{\mathcal B }(f) =\emptyset,
\]
where $\mathcal B$ is equal to $\mathscr FL^{q}_{(\omega)}$,
$M^{p,q}_{(\omega )}$ or $W^{p,q}_{(\omega )}$.
\end{cor}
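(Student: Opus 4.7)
My first move is to reduce the three cases $\mathcal B=\mathscr FL^q_{(\omega)},\ M^{p,q}_{(\omega)},\ W^{p,q}_{(\omega)}$ to a single one. By Theorem~\ref{wavefrontsequal} the corresponding wave-front sets coincide, while the ``locally the same'' identity recalled before Proposition~\ref{emjedanve} shows that the three membership conditions $f\in\mathcal B$ agree on compactly supported ultradistributions. Hence it suffices to prove the corollary for $\mathcal B=\mathscr FL^q_{(\omega)}$, the only case in which both sides refer directly to the Fourier transform of $f$.

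For the direction $f\in\mathcal B\Rightarrow \WF_{\mathcal B}(f)=\emptyset$, I would establish the stronger statement that $\fy f\in\mathcal B$ for every $\fy\in\cD^{(s)}(X)$, since this implies $\Sigma_{\mathcal B}(\fy f)=\emptyset$ and so $\WF_{\mathcal B}(f)=\emptyset$. Writing $\widehat{\fy f}=(2\pi)^{-d/2}\hat\fy*\hat f$ and using that $\omega$ is $v$-moderate to absorb the shift into $\hat\fy$, Young's inequality gives
\[
\|\widehat{\fy f}\,\omega\|_{L^q}\lesssim \|\hat\fy\,v\|_{L^1}\,\|\hat f\,\omega\|_{L^q}=\|\hat\fy\,v\|_{L^1}\,\|f\|_{\mathcal B},
\]
and the first factor is finite because $\hat\fy$ decays faster than $e^{-N|\xi|^{1/s}}$ for every $N>0$ while $v\lesssim e^{k|\xi|^{1/s}}$ for some fixed $k>0$.

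The reverse implication is the main work. Assume $\WF_{\mathcal B}(f)=\emptyset$ and choose $\fy\in\cD^{(s)}(X)$ equal to $1$ on a neighborhood of $\supp f$, so that $\fy f=f$. Fix $\xi_0\in\rr d\setminus 0$. For every $x_0\in\supp \fy$ the emptiness of the wave-front set supplies some $\psi_{x_0}\in\cD^{(s)}(X)$ with $\psi_{x_0}(x_0)\ne 0$ and an open cone $\Gamma_{x_0}\ni\xi_0$ with $|\psi_{x_0}f|_{\mathcal B(\Gamma_{x_0})}<\infty$. By compactness of $\supp\fy$, finitely many $\psi_i:=\psi_{x_0^{(i)}}$, $i=1,\dots,N$, have non-vanishing sets covering $\supp\fy$; set $\Gamma:=\bigcap_{i}\Gamma_i$, still an open conical neighborhood of $\xi_0$. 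I would next choose a $\cD^{(s)}$ partition of unity $\{h_i\}$ subordinate to a shrinking $\{U_i\}$ of this covering (with $\overline{U_i}\subseteq\{\psi_i\neq 0\}$) such that $\sum h_i\equiv 1$ near $\supp\fy$, and form $\eta_i:=h_i\fy/\psi_i$. On $\supp h_i$ the function $\psi_i$ is bounded away from zero, so $\eta_i\in\cD^{(s)}(X)$ by standard stability of the Gevrey class of order $s$ under reciprocals of non-vanishing functions. Then
\[
f=\fy f=\sum_i \eta_i\,(\psi_i f),
\]
and applying the estimate \eqref{cuttoff1} of Theorem~\ref{wavefrontprop11} to each term on a cone $\Gamma'$ with $\overline{\Gamma'}\subseteq\Gamma$, and using that $\psi_i f\in(\mathcal E^{\{s\}})'$ so its Fourier transform has $s$-exponential growth (making the supremum term finite once $N$ is chosen large enough), yields $|\eta_i(\psi_i f)|_{\mathcal B(\Gamma')}<\infty$, hence $|f|_{\mathcal B(\Gamma')}<\infty$. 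Finally, compactness of the unit sphere produces finitely many such cones $\Gamma'_j$ covering $\rr d\setminus 0$, and
\[
\|f\|_{\mathcal B}^{q}\le \sum_{j}|f|_{\mathcal B(\Gamma'_j)}^{q}<\infty
\]
(with the usual modification if $q=\infty$), giving $f\in\mathcal B$.

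The main obstacle I expect is the partition-of-unity and division step in the reverse direction: one must verify that $\eta_i=h_i\fy/\psi_i$ remains in $\cD^{(s)}(X)$. This depends essentially on the non-quasi-analyticity of $\cD^{(s)}$ for $s>1$ (equivalently, on the Beurling--Domar condition encoded in the definition of $\MRs$); without it, the local $\mathcal B$-finiteness of each $\psi_i f$ near $\xi_0$ could not be patched into a global bound on $f$.
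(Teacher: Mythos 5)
Your argument is correct, and it is worth noting that the paper itself offers no proof of this corollary: it is stated as an immediate by-product of Theorem \ref{wavefrontsequal} together with the local coincidence $\mathscr FL^q_{(\omega )}\cap (\mathcal E^{\{s\}})'=M^{p,q}_{(\omega )}\cap (\mathcal E^{\{s\}})'=W^{p,q}_{(\omega )}\cap (\mathcal E^{\{s\}})'$, with a pointer back to \cite[Corollary 6.2]{PTT}. Your reduction to the case $\mathcal B=\mathscr FL^q_{(\omega )}$ uses exactly these two ingredients, so up to that point you follow the intended route; what you add is the explicit verification that the paper leaves implicit. The forward direction via Young's inequality and the estimate $|\widehat\fy (\xi )|\lesssim e^{-N|\xi |^{1/s}}$ (this is \eqref{psiexpest}) is exactly right. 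The reverse direction is the standard patching argument: finitely many $\psi _i$ and cones by compactness, division $\eta _i=h_i\fy /\psi _i$, the microlocal estimate \eqref{cuttoff1} applied to each piece $\psi _i f\in (\mathcal E^{\{s\}})'$ (whose Fourier transform grows slower than $e^{\ep |\xi |^{1/s}}$ for every $\ep >0$, so the supremum term in \eqref{cuttoff1} is harmless), and a finite cover of the unit sphere. All of this is sound. The only imprecision is in your closing remark: the division step rests on the inverse-closedness of the Gevrey/Beurling classes (an algebraic stability property), while non-quasi-analyticity for $s>1$ is what guarantees the existence of the cut-offs and the $\cD^{(s)}$ partition of unity in the first place; neither is the same as the Beurling--Domar condition on the weight $\omega$, which plays no role in this particular corollary beyond making the spaces well defined.
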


\par

In particular, we recover Corollary 6.2 in \cite{PTT}, Theorem 2.1 and Remark 4.6 in \cite{RSTT}.

\par


\section{Discrete semi-norms in Fourier Lebesgue spaces} \label{sec4}

\par

In this section we introduce discrete analogues of the semi-norms in \eqref{skoff1} and \eqref{modseminorm}, and show that these semi-norms are finite if and only if the corresponding non-discrete semi-norms are finite. The techniques used here are similar to those in \cite{JPTT1}. 

\par

Assume that $q\in [1,\infty]$, $s>1$, $\omega\in \MRs(\rr d)$, $\mathcal B =\mathscr F\!L^q_{(\omega)}(\rr d)$, and $H\subset \rr d$ is a discrete set. Then we set
\begin{equation*}
|f|^{(D)}_{\mathcal B(H)} \equiv \left( \sum_{\xi_l\in H} |\widehat{f} (\xi_l) \omega(\xi_l)|^q \right)^{1/q},
\qquad \widehat{f}\in C(\rr d)\cap (\mathcal S ^{\{s\}})'(\rr d)
\end{equation*}
with obvious modifications when $q=\infty$. As in the continuous case, we may allow weight functions in
$ \MRs(\rr {2d})$, i.{\,}e. $\omega=\omega(x,\xi)$.
However, again we note that the condition
\begin{equation*}
|f|^{(D)}_{\mathcal B(H)} < \infty
\end{equation*}
is independent of $x\in \rr d$.

\par

By a lattice $\Lambda$ we mean the set
\begin{equation*}
\Lambda =\{ a_1 e_1 +\cdots +a_d e_d; \, a_1,\dots , a_d \in \mathbf Z\},
\end{equation*}
where $e_1,\dots,e_d$ is a basis in $\rr d$.

\par
The following Lemma was proved for distributions, cf. \cite{JPTT1,RT1,RT2}.

\begin{lemma}\label{discretelemma1}
Let $s>1$, $f\in (\mathcal E ^{\{s\}})'(\rr d)$, $\Gamma$ and $\Gamma_0$ be open cones in $\rr d\setminus 0$ such that $\overline{\Gamma_0}\subseteq \Gamma$, $q\in [1,\infty]$, and let $\Lambda\subseteq \rr d$ be a lattice. If $|f|_{\mathcal B(\Gamma)}$ is finite, then $|f|^{(D)}_{\mathcal B (\Gamma_0 \cap \Lambda)}$ is finite.
\end{lemma}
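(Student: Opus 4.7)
The plan is to dominate the discrete sum
\begin{equation*}
\sum_{\xi_l\in\Gamma_0\cap\Lambda}|\widehat f(\xi_l)\omega(\xi_l)|^q
\end{equation*}
by the hypothesis $\int_\Gamma|\widehat f(\eta)\omega(\eta)|^q\,d\eta<\infty$ via a uniform local sampling inequality
\begin{equation*}
|\widehat f(\xi_l)\,\omega(\xi_l)|^q \;\lesssim\; \int_{B_r(\xi_l)} |\widehat f(\eta)\,\omega(\eta)|^q\,d\eta,
\end{equation*}
valid for a single $r>0$ and all $\xi_l\in\Gamma_0\cap\Lambda$ with $|\xi_l|$ sufficiently large, followed by summation using bounded overlap of the balls. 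This is the same overall strategy used for classical distributions in \cite{JPTT1,RT1,RT2}, and the geometric and weight ingredients below transfer without change; only the analytic core requires new input in the Roumieu setting.

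First I would set up the geometry: since $\overline{\Gamma_0}\subseteq\Gamma$ are open cones, compactness on the sphere yields $r_0,R_0>0$ with $B_{r_0}(\xi)\subseteq\Gamma$ for every $\xi\in\Gamma_0$ with $|\xi|\ge R_0$. Fixing $r\le r_0$ ensures the balls $\{B_r(\xi_l)\}_{\xi_l\in\Lambda}$ have uniformly bounded overlap, since $\Lambda$ is a lattice, and the finitely many $\xi_l\in\Gamma_0\cap\Lambda$ with $|\xi_l|<R_0$ contribute only a finite amount because $\widehat f$ is continuous (as $f$ has compact support) and $\omega$ is locally bounded. Next, the weight can be absorbed: by $v$-moderateness and the boundedness of $v$ on $B_r(0)$ one has $\omega(\xi_l)\lesssim\omega(\eta)$ for $\eta\in B_r(\xi_l)$ uniformly in $\xi_l$, so the weighted sampling inequality reduces to the unweighted one $|\widehat f(\xi_l)|^q\lesssim\int_{B_r(\xi_l)}|\widehat f(\eta)|^q\,d\eta$. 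Granted such an estimate, summing over the large $\xi_l$ and using $B_r(\xi_l)\subseteq\Gamma$ gives $\sum|\widehat f(\xi_l)\omega(\xi_l)|^q\lesssim|f|_{\mathcal B(\Gamma)}^q<\infty$, and adding the finite exceptional part completes the proof.

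The crux, and the main obstacle, is the unweighted sampling inequality for $\widehat f$. By the Paley--Wiener characterisation for compactly supported Roumieu ultradistributions, $\widehat f$ extends to an entire function on $\mathbb C^d$ satisfying $|\widehat f(\zeta)|\lesssim e^{H_K(\mathrm{Im}\,\zeta)+k|\zeta|^{1/s}}$ for some $k>0$ and a compact $K\supseteq\supp f$. Consequently $|\widehat f|^q$ is plurisubharmonic on $\mathbb C^d$, and applying the sub-mean-value property on the complex ball of radius $r$ around $\xi_l\in\rr d$, together with the above Paley--Wiener bound to pass from the complex integral back to a real one along the Plancherel--P\'olya scheme, yields the desired uniform estimate. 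In the distributional case of \cite{JPTT1,RT1,RT2} polynomial derivative bounds on $\widehat f$ permit a direct mean-value argument instead; here the sub-exponential factor $e^{k|\cdot|^{1/s}}$ forces the plurisubharmonic/Paley--Wiener route, with care needed to control the imaginary-direction growth on the complex ball uniformly in $\xi_l$ while retaining integrability on the real axis.
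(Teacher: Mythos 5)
Your geometric and weight-handling preliminaries are fine: a fixed ball $B_{r_0}(\xi_l)$ does sit inside $\Gamma$ once $\xi_l\in\Gamma_0$ and $|\xi_l|\ge R_0$, the lattice gives bounded overlap, the finitely many small $\xi_l$ are harmless since $\widehat f$ is entire, and $v$-moderateness lets you trade $\omega(\xi_l)$ for $\omega(\eta)$ on the ball. But the entire weight of the proof rests on the ``crux'' paragraph, and that paragraph asserts rather than proves the key inequality. The sub-mean-value property of the plurisubharmonic function $|\widehat f|^q$ controls $|\widehat f(\xi_l)|^q$ by an average over the \emph{complex} ball $B_r^{\mathbb C}(\xi_l)\subseteq\cc d$, a $2d$-dimensional average; the lemma needs an average over the $d$-dimensional \emph{real} ball. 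The classical Plancherel--P\'olya mechanism for bridging this gap integrates over horizontal slices $\rr d+iy$ and invokes the global bound $\nm{\widehat f(\cdot+iy)}{L^q(\rr d)}\lesssim e^{\sigma|y|}\nm{\widehat f}{L^q(\rr d)}$; this is global in the real variable, so it destroys exactly the cone localization you need, and moreover it is unavailable here because the Paley--Wiener bound for $f\in(\mathcal E^{\{s\}})'$ carries the factor $e^{k|\xi|^{1/s}}$ on the real axis, so $\widehat f$ need not lie in $L^q(\rr d)$ at all. You flag that ``care is needed,'' but no mechanism is supplied for passing from the complex ball to the real ball while staying inside $\Gamma$, and I do not see that a purely local pointwise sampling inequality of the form you state follows from plurisubharmonicity plus the Paley--Wiener estimate.

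There is also a structural hint that something is missing: your scheme would yield the clean domination $\sum_{\xi_l\in\Gamma_0\cap\Lambda}|\widehat f(\xi_l)\omega(\xi_l)|^q\lesssim |f|_{\mathcal B(\Gamma)}^q$ with no remainder, whereas the paper proves only finiteness, with an additive constant coming from outside the cone. The paper's route avoids complexification altogether: writing $f=\fy f$ for $\fy\in\cD^{(s)}$ equal to $1$ on $\supp f$ gives the reproducing formula $\widehat f=c\,\widehat\fy*\widehat f$, and the convolution integral is split into $\eta\in\Gamma$ and $\eta\in\complement\Gamma$. The $\Gamma$-part is handled by H{\"o}lder's inequality together with $\sup_\eta\sum_{\xi_l}|\widehat\fy(\xi_l-\eta)v(\xi_l-\eta)|<\infty$ (which holds because $\widehat\fy$ decays faster than $e^{-N|\cdot|^{1/s}}$ for every $N$), producing $C\,|f|_{\mathcal B(\Gamma)}^q$; the $\complement\Gamma$-part is finite because the cone separation estimate $|\xi_l-\eta|^{1/s}\ge \tfrac{c}{2}(|\xi_l|^{1/s}+|\eta|^{1/s})$ lets the decay of $\widehat\fy$ beat the global sub-exponential growth $|\widehat f|\lesssim e^{k|\cdot|^{1/s}}$. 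If you want to keep your sampling-inequality architecture, you would need to prove the local inequality by essentially this convolution argument anyway (with the far tail contributing a finite error term rather than being absorbed into the local integral), at which point you have reproduced the paper's proof. As written, the proposal has a genuine gap at its central step.
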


\begin{proof}
We only prove the result for $q<\infty$, leaving the small modifications in the case $q=\infty$ for the reader. Assume that $|f|_{\mathcal B(\Gamma)}<\infty$, and let $H=\Gamma_0\cap \Lambda$. Also let $\fy\in \mathcal D^{(s)}(\rr d)$ be such that $\fy =1$ in $\supp f$. Then
\begin{multline*}
(|f|^{(D)}_{\mathcal B (\Gamma_0 \cap \Lambda)})^q
= \sum_{\xi_l \in H} |\mathscr F (\fy f)(\xi_l) \omega (\xi_l)|^q
\\[1 ex]
= (2\pi)^{-qd/2} \sum_{\xi_l \in H} \left | \int \widehat{\fy}(\xi_l-\eta)\widehat{f}(\eta) \omega(\xi_l) \, d\eta\right |^q
\lesssim (S_1+S_2),
\end{multline*}
where
\begin{align*}
S_1 = & \sum_{\xi_l \in H} \left( \int_{\Gamma}             \psi(\xi_l-\eta)F(\eta) \, d\eta \right)^q,
\\[1 ex]
S_2 = & \sum_{\xi_l \in H}\left( \int_{\complement\Gamma}\psi(\xi_l-\eta)F(\eta) \, d\eta \right)^q .
\end{align*}
Here we set $F(\xi)=|\widehat{f}(\xi)\omega(\xi)|$ and $\psi(\xi)=|\widehat{\fy}(\xi)v(\xi)|$ as in the proof of Theorem \ref{wavefrontprop11}.
\par

We need to estimate $S_1$ and $S_2$. Let  $c$ be chosen such that $\omega$ is  moderate  with respect to 
$v = e^{c |\cdot|^{1/s}}.$  By Hölder's inequality we get
\begin{multline*}
S_1\leq C' \sum_{\xi_l \in H} \left ( \int_{\Gamma} \psi(\xi_l-\eta) F(\eta)\, d\eta\right)^q
\\[1 ex]
= C' \sum_{\xi_l \in H} \left ( \int_{\Gamma} \psi(\xi_l-\eta)^{1/q'}( \psi(\xi_l-\eta)^{1/q}F(\eta))\, d\eta\right)^q
\\[1 ex]
\leq  C' \nm{F}{L^1}^{q/q'}\sum_{\xi_l \in H} \int_{\Gamma} \psi(\xi_l-\eta)F(\eta))^q \, d\eta
\\[1 ex]
\leq C''\int_{\Gamma} F(\eta)^q \, d\eta
= C''|f|^q_{\mathcal B(\Gamma)},
\end{multline*}
where
\begin{equation*}
C'' = C' \nm{\psi}{L^1}^{q/q'} \sup_{\eta \in \rr d} \sum_{\xi_l \in H}  \psi(\xi_l-\eta)
\end{equation*}
is finite by \eqref{psiexpest}. This proves that $S_1$ is finite.

\par

It remains to prove that $S_2$ is finite. We observe that
\begin{multline*}
|\xi_l-\eta|^{1/s} \geq c \max (|\xi_l|^{1/s}, |\eta|^{1/s}) \geq \frac{c}{2} (|\xi_l|^{1/s} + |\eta|^{1/s})
\\[1 ex]
\text{when} \quad \xi_l \in H \quad\text{and} \quad \eta \in \complement \Gamma.
\end{multline*}
Since $f\in (\mathcal E^{\{s\}})'(\rr d)$ it follows that
\begin{equation*}
|F|\lesssim e^{c|\cdot|^{1/s}}
\end{equation*}
for every positive constant $c$. Furthermore, since $\fy \in \mathcal D^{(s)}(\rr d)$, it follows that for each $N\geq 0$, there is a constant $C_N$ such that
$\psi \leq C_N e^{-N|\cdot|^{1/s}}.$ This gives
\begin{multline*}
S_2 \lesssim  \sum_{\xi_l \in H} \left( \int_{\complement \Gamma} e^{-N|\xi_l-\eta|^{1/s}} e^{k|\eta|^{1/s}} \, d\eta \right)^q
\\[1 ex]
\lesssim  \sum_{\xi_l \in H}
e^{-q N c/2|\xi_l|^{1/s}}
\left( \int e^{-(N c/2-k)|\eta|^{1/s}} \, d\eta \right)^q,
\end{multline*}
where we have used the fact that $\omega $ is $v$-moderate.
The result now follows, since the right-hand side is finite when $N > 2k/c.$
The proof is complete.
\end{proof}

\par

Next we prove a converse of Lemma \ref{discretelemma1},
in the case when the lattice $\Lambda$ is dense enough. Let $e_1,\dots,e_d$ in $\rr d$ be a basis for $\Lambda$, i.{\,}e. for some $x_0\in \Lambda$ we have
\begin{equation*}
\Lambda=\{x_0 + t_1 e_1 + \cdots + t_d e_d; \, t_1,\dots, t_d\in \mathbf Z\}.
\end{equation*}
A parallelepiped $D$, spanned by $e_1,\dots,e_d$ for $\Lambda$ and with corners in $\Lambda$, is called a $\Lambda$-parallelepiped. This means that for some $x_0\in \Lambda$ and some basis $e_1,\dots,e_d$ for $\Lambda$ we have
\begin{equation*}
D=\{x_0 + t_1 e_1 + \cdots + t_d e_d; \, t_1,\dots, t_d\in [0,1]\}.
\end{equation*}

\par

We let $\mathcal A(\Lambda)$ be the set of all $\Lambda$-parallelepipeds. For future references we note that if $D_1, D_2\in \mathcal A(\Lambda)$, then their volumes $|D_1|$ and $|D_2|$ agree, and for convenience we let $\| \Lambda\|$ denote the common value, i.{\,}e.
\begin{equation*}
\|\Lambda\|=|D_1|=|D_2|.
\end{equation*}

\par

Let $\Lambda_1$ and $\Lambda_2$ be lattices in $\rr d$ with bases $e_1,\dots, e_d$ and $\ep_1,\dots,\ep_d$ respectively. Then the pair $(\Lambda_1,\Lambda_2)$ is called \emph{admissible lattice pair}, if for some $0<c\leq 2 \pi$ we have $\eabs{ e_j,\ep_j}=c$ and $\eabs{ e_j,\ep_k}=0$ when $j\neq k$. If in addition $c< 2\pi$, then $(\Lambda_1,\Lambda_2)$ is called a \emph{strongly admissible lattice pair}. If instead $c=2 \pi$, then the pair $(\Lambda_1,\Lambda_2)$ is called a \emph{weakly admissible lattice pair}.

\par

\begin{lemma}\label{discretelemma2}
Let $s>1$, $(\Lambda_1,\Lambda_2)$ be an admissible lattice pair, $D_1 \in \mathcal A(\Lambda_1)$ (be open), and let
$f \in (\mathcal E ^{\{s\}})'(\rr d )$ be such that an open neighbourhood of its support is contained in $D_1$.
Also let $\Gamma$ and $\Gamma_0$ be open cones in $\rr d$ such that $\overline{\Gamma_0}\subseteq \Gamma$.
If $|f|^{(D)}_{\mathcal B(\Gamma\cap \Lambda_2)}$ is finite, then $|f|_{\mathcal B(\Gamma_0)}$ is finite.
\end{lemma}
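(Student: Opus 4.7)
The plan is to derive a reconstruction formula expressing $\widehat f(\xi)$ as an absolutely convergent series in the samples $\{\widehat f(\mu)\}_{\mu\in\Lambda_2}$, after which the estimate on $\Gamma_0$ proceeds by cone splitting in the spirit of Lemma \ref{discretelemma1}. First I would select an intermediate cone $\Gamma_1$ with $\overline{\Gamma_0}\subseteq \Gamma_1 \subseteq \overline{\Gamma_1}\subseteq \Gamma$ and pick $\psi\in\mathcal D^{(s)}(\rr d)$ with $\psi\equiv 1$ on an open neighbourhood of $\supp f$ and $\supp \psi\subseteq D_1^\circ$. Since the pair is admissible with parameter $c\le 2\pi$, the dual lattice $\Lambda_2^*$ equals $(2\pi/c)\Lambda_1$, so its fundamental cells are $\Lambda_1$-parallelepipeds dilated by a factor $2\pi/c\ge 1$; the hypothesis that $\supp f$ sits in $D_1^\circ$ together with $c\le 2\pi$ then guarantees that the $\Lambda_2^*$-translates of $\supp f$ are pairwise disjoint. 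Applying Poisson summation for $\Lambda_2^*$ on $D_1^\circ$ and multiplying by $\psi$ yields the identity
\[
f(x) = C_0\,\psi(x)\sum_{\mu\in\Lambda_2} \widehat f(\mu)\, e^{i\langle \mu, x\rangle}, \qquad C_0=\frac{(2\pi)^{d/2}}{\|\Lambda_2^*\|},
\]
and taking the Fourier transform produces the key reconstruction
\[
\widehat f(\xi) = C_0\sum_{\mu\in\Lambda_2} \widehat f(\mu)\,\widehat\psi(\xi-\mu).
\]

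Next I would split this sum as $S_1(\xi)+S_2(\xi)$ according to whether $\mu\in\Gamma_1\cap\Lambda_2$ or $\mu\in\Lambda_2\setminus\Gamma_1$, and bound $|f|_{\mathcal B(\Gamma_0)}\lesssim \|S_1\omega\|_{L^q(\Gamma_0)}+\|S_2\omega\|_{L^q(\Gamma_0)}$. For $S_1$ I would use the $v$-moderation $\omega(\xi)\lesssim v(\xi-\mu)\,\omega(\mu)$ to reduce the bound to a discrete-continuous convolution $\sum_\mu a_\mu K(\xi-\mu)$ with $a_\mu=|\widehat f(\mu)\omega(\mu)|\,\chi_{\Gamma_1\cap\Lambda_2}(\mu)$ and $K=|\widehat\psi|\,v$; a H\"older argument analogous to the treatment of $S_1$ in the proof of Lemma \ref{discretelemma1}, using that $K\in L^1(\rr d)$ and $\sup_\xi\sum_{\mu\in\Lambda_2}K(\xi-\mu)<\infty$ (both consequences of the decay in \eqref{psiexpest}), gives $\|S_1\omega\|_{L^q}\lesssim |f|^{(D)}_{\mathcal B(\Gamma\cap\Lambda_2)}$. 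For $S_2$ I would combine the ultradistributional bound $|\widehat f(\mu)|\lesssim e^{k|\mu|^{1/s}}$ for some $k>0$, the Gelfand--Shilov decay $|\widehat\psi(\eta)|\lesssim e^{-N|\eta|^{1/s}}$ valid for every $N>0$, and the cone separation \eqref{estimate1} for $\xi\in\Gamma_0$ and $\mu\notin\Gamma_1$, to obtain $|S_2(\xi)\omega(\xi)|\lesssim e^{-\delta|\xi|^{1/s}}$ for some $\delta>0$ once $N$ is chosen large enough, whence $\|S_2\omega\|_{L^q(\Gamma_0)}<\infty$.

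The main obstacle is the Poisson summation step: one has to justify the series expansion $f(x)=C_0\sum_{\mu\in\Lambda_2}\widehat f(\mu)\,e^{i\langle \mu,x\rangle}$ on $D_1^\circ$ for an ultradistribution $f\in(\mathcal E^{\{s\}})'(\rr d)$, which requires interpreting the sum in $(\mathcal D^{(s)})'$ via the Roumieu analogue of the Poisson formula and verifying that the $\Lambda_2^*$-translates of $\supp f$ are disjoint. The latter is precisely where the inequality $c\le 2\pi$ is used, and it treats the weakly ($c=2\pi$) and strongly ($c<2\pi$) admissible cases uniformly. Once this identity is established, the remaining cone-splitting estimates are direct variants of those in Lemma \ref{discretelemma1} and in the proof of Theorem \ref{wavefrontprop11}.
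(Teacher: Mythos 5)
Your proposal is correct and follows essentially the same route as the paper: the reconstruction $\widehat f(\xi)=C\sum_{\xi_l\in\Lambda_2}\widehat f(\xi_l)\widehat\psi(\xi-\xi_l)$ you obtain via Poisson summation is exactly the Fourier-series expansion of $f=\psi f$ on $D_1$ used in the paper, and the subsequent cone splitting with a H\"older/Minkowski estimate on the near part and the separation estimate \eqref{estimate1} combined with $|\widehat f(\xi_l)|\lesssim e^{k|\xi_l|^{1/s}}$ and $|\widehat\psi(\eta)|\lesssim e^{-N|\eta|^{1/s}}$ on the far part is identical. The only cosmetic difference is that the paper first shrinks $\Lambda_1$ to reduce to the strongly admissible case, whereas you verify the disjointness of the $\Lambda_2^*$-translates of $\supp f$ directly for all $c\le 2\pi$.
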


\begin{proof}
Since $D_1$ contains an open neighbourhood of the support of $f$, we may modify $\Lambda_1$ (and therefore $D_1$)
such that the lattice pair $(\Lambda_1,\Lambda_2)$ is strongly admissible,
and such that the hypothesis still holds. From now on we therefore assume that $(\Lambda_1,\Lambda_2)$
is strongly admissible.

We use similar arguments as in the proof of Lemma \ref{discretelemma1}.
Again we prove the result only for $q < \infty$. The small modifications to the case $q = \infty$ are left for
the reader.

Assume that $|f|^{(D)}_{\mathcal B(\Gamma \cap\Lambda_2)} < \infty$, and let $\fy\in  \mathcal D^{(s)}
(D_1^\circ)$ be equal to one in the support of $f$, where $D_1^\circ$ denotes the interior of
the set $D_1$.

By expanding $f = \fy f$ into a Fourier series on $D_1$ we get
\begin{equation*}
\widehat{f}(\xi)= C \sum_{\xi_l\in \Lambda_2} \widehat{\fy} (\xi - \xi_l) \widehat{f}(\xi_l),
\end{equation*}
where the positive constant $C$ only depends on $\Lambda_2$. We have
\begin{multline*}
(|f|_{\mathcal B(\Gamma_0)})^q = \int_{\Gamma_0} |\widehat{f}(\xi)\omega(\xi)|^q \, d\xi
\\[1 ex]
= C^q \int_{\Gamma_0}\left| \sum_{\xi_l \in \Lambda_2} \widehat{\fy}(\xi-\xi_
l)\widehat{f}(\xi_l)\omega(\xi) \right|^q \, d\xi \leq C^q (S_1+S_2),
\end{multline*}
where
\begin{align*}
S_1 = &\int_{\Gamma_0}\left| \sum_{\xi_l \in H_1} \widehat{\fy}(\xi-\xi_l)\widehat{f}(\xi_l)\omega(\xi) \right|^q \, d\xi,
\\[1 ex]
S_2= &\int_{\Gamma_0}\left| \sum_{\xi_l \in H_2} \widehat{\fy}(\xi-\xi_l)\widehat{f}(\xi_l)\omega(\xi) \right|^q \, d\xi,
\end{align*}
$H_1 = \Gamma \cap \Lambda_2 $ and $H_2 = \complement \Gamma \cap \Lambda_2. $

\par

We have to estimate $S_1$ and $S_2$.  Let $\omega$ be moderate  with respect to the weight
$v(\cdot) = e^{k |\cdot|^{1/s}}.$  By Minkowski's inequality we get
\begin{multline*}
S_1 \leq  C \int_{\Gamma_0}\left(\sum_{\xi_l \in H_1} | \widehat{\fy}(\xi - \xi_l)v(\xi - \xi_l)||\widehat{f} (\xi_l)\omega(\xi_l )|\right)^q \, d\xi
\\[1 ex]
\leq  C' \int_{\Gamma_0}\left(\sum_{\xi_l \in H_1} | \widehat{\fy}(\xi - \xi_l)v(\xi - \xi_l)||\widehat{f} (\xi_l)\omega(\xi_l )|^q\right) \, d\xi
\\[1 ex]
\leq C'' \sum_{\xi_l \in H_1} |\widehat{f} (\xi_l)\omega(\xi_l )|^q ,
\end{multline*}
where
\begin{equation*}
C' = C \sup_\xi \nm{\widehat{\fy}(\xi - \xi_l)v(\xi - \xi_l)}{l^1(\Lambda_2)}^{q/q'}
< \infty,
\quad \text{and} \quad  C'' = C' \nm{\fy}{\mathscr F \! L^1_{(v)}}< \infty.
\end{equation*}
This proves that $S_1$ is finite when $|f|^{(D)}_{\mathcal B(\Gamma\cap \Lambda_2)} < \infty.$

\par

It remains to prove that $S_2$ is finite. We recall that
\begin{multline*}
|\xi - \xi_l|^{1/s} \geq c \max(|\xi |^{1/s}, |\xi_l |^{1/s}) \geq c(|\xi |^{1/s}+ |\xi_l |^{1/s})/2
\\[1 ex]
\text{when} \quad \xi\in \Gamma_0 \ \text{and} \ \xi_l \in H_2,
\end{multline*}
and use the same arguments as in the proof of Lemma  \ref{discretelemma1}
to obtain
\begin{multline*}
S_2 \lesssim  \int_{\Gamma_0} \left( \sum_{\xi_l\in H_2}e^{-N |\xi-\xi_l|^{1/s}} e^{k |\xi_l|^{1/s}} \right)^q \, d\xi
\\[1 ex]
\lesssim  \int_{\Gamma_0} e^{- q N c/2 |\xi|^{1/s}} \left( \sum_{\xi_l\in H_2}
e^{-(N c/2-k) |\xi_l|^{1/s}} \right)^q \, d\xi.
\end{multline*}
The result now follows, since the right-hand side is finite when $N>2k/c$. The proof is complete.
\end{proof}

\par

\begin{cor}\label{cuttoffdiscrete} Let $s>1$, $(\Lambda_1,\Lambda_2)$ be an admissible lattice pair,
$D_1 \in \mathcal A(\Lambda_1)$, and let $f \in (\mathcal E^{\{s\}})'(\rr d)$ be such that an open
neighbourhood of its support is contained in $D_1$. Also let $\Gamma$ and $\Gamma_0$ be open cones in $\rr d$
such that $\overline{\Gamma_0}\subseteq \Gamma$. If $|f|^{(D)}_{\mathcal B(\Gamma\cap \Lambda_2)}$ is finite,
then $|\fy f |^{(D)}_{\mathcal B(\Gamma_0\cap \Lambda_2)}$ is finite for every $\fy \in \mathcal D^{(s)}(X)$.
\end{cor}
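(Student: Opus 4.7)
The plan is to obtain the corollary by chaining together the three results proved earlier: Lemma \ref{discretelemma2} (discrete-to-continuous), the cutoff estimate \eqref{cuttoff1} from Theorem \ref{wavefrontprop11} (local invariance under multiplication by test functions in the continuous setting), and finally Lemma \ref{discretelemma1} (continuous-to-discrete). The idea is that the discrete semi-norm controls the continuous semi-norm on a slightly smaller cone, after which we apply a $\mathcal D^{(s)}$ cutoff and then re-discretize.

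First I would introduce two intermediate open cones $\Gamma_2$ and $\Gamma_1$ such that
\[
\overline{\Gamma_0}\subseteq \Gamma_2 \subseteq \overline{\Gamma_2}\subseteq \Gamma_1 \subseteq \overline{\Gamma_1}\subseteq \Gamma.
\]
Applying Lemma \ref{discretelemma2} to $f$ with the cones $\Gamma$ and $\Gamma_1$ (note that the hypothesis on the support of $f$ relative to $D_1$ is exactly the one in Lemma \ref{discretelemma2}, and the admissibility of $(\Lambda_1,\Lambda_2)$ is available), the finiteness of $|f|^{(D)}_{\mathcal B(\Gamma\cap\Lambda_2)}$ yields finiteness of $|f|_{\mathcal B(\Gamma_1)}$. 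Next, since $\overline{\Gamma_2}\subseteq \Gamma_1$, the estimate \eqref{cuttoff1} established in the proof of Theorem \ref{wavefrontprop11} (applied with $\fy\in \mathcal D^{(s)}(X)$ and $\vartheta=\omega$) gives
\[
|\fy f|_{\mathcal B(\Gamma_2)} \lesssim |f|_{\mathcal B(\Gamma_1)} + \sup_{\xi\in\rr d}\bigl(|\widehat f(\xi)\omega(\xi)|e^{-N|\xi|^{1/s}}\bigr),
\]
and since $f\in(\mathcal E^{\{s\}})'(\rr d)$ the second term is finite once $N$ is taken large enough (recall $|\widehat f \, \omega|\lesssim e^{k|\xi|^{1/s}}$ for some $k>0$). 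Hence $|\fy f|_{\mathcal B(\Gamma_2)}<\infty$. Finally, since $\fy f\in(\mathcal E^{\{s\}})'(\rr d)$ is compactly supported, and $\overline{\Gamma_0}\subseteq \Gamma_2$, Lemma \ref{discretelemma1} applies to $\fy f$ with the cones $\Gamma_2$ and $\Gamma_0$, and gives the desired conclusion $|\fy f|^{(D)}_{\mathcal B(\Gamma_0\cap\Lambda_2)}<\infty$.

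The only point that needs verification is that every lemma can be invoked in the form stated: Lemma \ref{discretelemma2} requires the support condition on $f$ relative to $D_1$ which is given in the hypothesis; the cutoff step requires no support condition for $\fy f$ other than $\fy f\in (\mathcal E^{\{s\}})'$, which follows automatically because $\fy\in \mathcal D^{(s)}(X)$ and $f$ is compactly supported; and Lemma \ref{discretelemma1} only needs its argument to be in $(\mathcal E^{\{s\}})'$, which $\fy f$ is. So there is no genuine obstacle; the work is entirely a bookkeeping exercise in nesting cones and transporting information across the discrete/continuous divide via the two lemmas, with the cutoff estimate \eqref{cuttoff1} sitting in the middle of the chain.
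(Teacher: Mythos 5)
Your proof is correct and follows essentially the same route as the paper: Lemma \ref{discretelemma2} to pass from the discrete to the continuous semi-norm on a slightly smaller cone, then the cutoff estimate \eqref{cuttoff1} from the proof of Theorem \ref{wavefrontprop11}, then Lemma \ref{discretelemma1} to re-discretize, with nested intermediate cones. The paper's own proof is exactly this three-step chain, so there is nothing to add.
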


For the proof we recall that $|\fy f |_{\mathcal B(\Gamma_0)}$ is finite when
$f \in (\mathcal E^{\{s\}})' (\rr d )$, $\fy\in \mathcal D^{\{s\}}(X)$,
and $|f|_{\mathcal B(\Gamma)}$ is finite. This follows from the proof of Theorem \ref{wavefrontprop11}.

\begin{proof} Let $\Gamma_1$, $\Gamma_2$ be open cones such that $\overline{\Gamma_j}\subseteq \Gamma_{j+1}$ for $j = 0, 1$, $\overline{\Gamma_2}\subseteq\Gamma$, and assume that $|f|^{(D)}_{\mathcal B(\Gamma\cap\Lambda_2)} < \infty$. Then Lemma \ref{discretelemma2} shows that $|f|_{\mathcal B(\Gamma_2) }$ is finite. Hence, Theorem \ref{wavefrontprop11} implies that $|\fy f |_{\mathcal B(\Gamma_1)} < \infty$. This gives $|\fy  f|^{(D)}_{\mathcal B(\Gamma_0 \cap \Lambda_2)} < \infty$, in view of Lemma  \ref{discretelemma1}. The proof is complete.
\end{proof}

\par


\section{Gabor pairs}

\par

In this section we introduce in Definition \ref{Gaborpair} the notion of Gabor pairs.
We refer to \cite{JPTT1} for an explanation that conditions in Definition \ref{Gaborpair} are quite general.

\par

By Definition \ref{Gaborpair} it follows that
our analysis can be applied to the most general classes
of non-quasianalytic ultradistributions, and it also points out the role of Beurling-Domar weights
in definitions of $ \mathscr FL^q_{(\omega )}(\rr d) $ and $ M^{p,q}_{(\omega )}(\rr d)$,
cf. \cite{Fe4, Gro2, GZ}.
On the other hand, a larger class of quasianalytic
ultradistributions can not be treated by the technique given here, since the corresponding test function spaces
do not contain smooth functions of compact support.

\par

Assume that $e_1,\dots, e_d$ is a basis for the lattice $\Lambda_1$, and that $(\Lambda_1,\Lambda_2)$ is a weakly admissible lattice pair. If $f \in L^2_{\loc}$ is periodic with respect to $\Lambda_1$, and $D$ is the parallelepiped, spanned by $\{e_1, \dots , e_d \}$, then we may make Fourier expansion of $f$ as
\begin{equation}\label{f1}
f (x) = \sum_{\xi_l\in \Gamma_2} c_l e^{i \eabs{x,\xi_l}}, \qquad x \in \rr d
\end{equation}
(with convergence in $L^2_{\loc}$), where the coefficients $c_l$ are given by
\begin{equation}\label{coeff1}
c_l =\int_{\Delta} f (y)e^{-i \eabs{y,\xi_l}}\, dy.
\end{equation}
Here and in what follows we let
\begin{equation}\label{scalpro}
y = y_1e_1 + \cdots + y_de_d, \qquad dy = dy_1 \cdots dy_d \qquad \text{and} \qquad
\Delta = [0, 1]^d .
\end{equation}
For non-periodic functions and distributions we instead make Gabor expansions. Because of the support properties of the involved Gabor atoms and their duals, we are usually forced to change the assumption on the involved lattice pairs. More precisely, instead of assuming that $(\Lambda_1,\Lambda_2)$ should be a weakly admissible lattice pair, we assume from now on that $(\Lambda_1,\Lambda_2)$ is a strongly admissible lattice pair, with $\Lambda_1 = \{x_j\}_{j\in J}$ and $\Lambda_2 = \{\xi_l \}_{l\in J}$. Also let $s>1$ and
\begin{align}
\nonumber\phi,\psi \in \mathcal D^{(s)}(\rr d ), \qquad\phi_{j,l} (x) = &\phi(x - x_j )e^{i \eabs{x,\xi_l}}
\\[1 ex]
\text{and} \quad \psi_{j,l} (x) = \psi(x - x_j )e^{i \eabs{x,\xi_l}}\label{phipsi1}
\end{align}
be such that $\{\phi_{j,l} \}_{j,l\in J}$ and $\{\psi_{j,l}\}_{j,l\in J}$ are dual Gabor frames (see \cite{Gro-book,fegr97} for the definition and basic properties of Gabor frames and their duals). If $f \in(\mathcal S^{\{s\}})'(\rr d )$ then
\begin{equation}\label{f}
f = \sum_{j,l\in J} c_{j,l}\phi_{j,l},
\end{equation}
where
\begin{equation}\label{coeff}
c_{j,l} = C_{\phi,\psi}( f,\psi_{j,l})_{L^2(\rr d )}
\end{equation}
and the constant $C_{\phi,\psi}$ depends on the frames only.

\par

Note that the convergence is in  $(\mathcal S^{\{s\}})'(\rr d )$ due to Proposition \ref{stftestimates}.


\par

\begin{defn}\label{Gaborpair}
Assume that $\ep \in(0, 1]$, $\{x_j \}_{j \in J} = \Lambda_1\subseteq \rr d$ and
$\{\xi_l \}_{l\in J} = \Lambda_2 \subseteq  \rr d$ are lattices and let $\Lambda_1(\ep) = \ep \Lambda_1$.
Also let  $\phi,\psi\in C_0 ^\infty (\rr d )$ be non-negative, and set
\begin{align}\label{phipsi}
&\phi^{\ep} = \phi( \cdot /\ep), &\psi^{\ep}&=\psi( \cdot /\ep),
\\[1 ex] \nonumber
&\phi^{\ep}_{j,l} = \phi^{\ep}(\cdot -\ep x_j )e^{i\eabs{\cdot,\xi_l}}, &\psi^{\ep}_{j,l} &=\psi^{\ep}(\cdot -\ep x_j )e^{i\eabs{\cdot,\xi_l}}
\end{align}
when $\ep x_j \in \Lambda_1(\ep)$ (i.{\,}e. $x_j \in \Lambda_1$) and $\xi_l \in \Lambda_2$. Then the pair
\begin{equation}\label{pair}
(\{\phi_{j,l}\}_{j,l\in J} , \{\psi_{j,l }\}_{j,l\in J} )
\end{equation}
is called a Gabor pair with respect to the lattices $\Lambda_1$ and $\Lambda_2$ if for each $\ep \in (0, 1]$, the sets $\{\phi^{\ep}_{j,l }\}_{j,l \in J}$ and $\{\psi^{\ep}_{j,l}\}_{j,l\in J}$ are dual Gabor frames.
\end{defn}

\par

By Definition \ref{Gaborpair} and Chapters 5-13 in \cite{Gro-book} it follows that if $f \in (\mathcal S^{\{s\}})'(\rr d )$ and if
$(\{\phi_{j,l}\}_{j,l \in J} , \{\psi_{j,l} \}_{j,l \in J} )$ is a Gabor pair, then
\begin{equation}\tag*{(\ref{f})$''$}
f = \sum_{j,l \in J} c_{j,l}(\ep) \phi^{\ep}_{j,l}
\end{equation}
in $(\mathcal S^{\{s\}})'(\rr d )$, for every $\ep \in (0, 1]$, where
\begin{equation}\tag*{(\ref{coeff})$''$}
c_{j,l}( \ep) = (f,\psi^{\ep}_{j,l}).
\end{equation}

\par

Here $(\cdot, \cdot)$ denotes the unique extension of the $L^2$-form on $\mathcal S^{\{s\}}(\rr d ) \times \mathcal S^{\{s\}}(\rr d )$ into
$(\mathcal S^{\{s\}})'(\rr d )\times (\mathcal S^{\{s\}})'(\rr d )$.

\par

We remark that if the pair in \eqref{pair} is a Gabor pair, then it follows
from the investigations in \cite{Gro-book} that the lattice pair $(\Lambda_1,\Lambda_2)$ in Definition \ref{Gaborpair} is strongly admissible.

\par

The following proposition explains that any pair of dual Gabor frames
satisfying a mild additional condition, generates a Gabor pair.

\par

\begin{prop}\label{propgabor} {\rm{\cite{JPTT1}}}
Let $\phi ,\psi \in C_0 ^\infty (\rr d )$ be non-negative functions
and let $\phi_{j,l}$ and $\psi_{j,l}$
be given by \eqref{phipsi1}.
Also, let $\Lambda_1$ and $\Lambda_2$ be the same as in Definition \ref{Gaborpair}.
If  $\{ \phi_{j,l} \} _{j,l\in J}$ and $\{\psi _{j,l} \} _{j,l\in J}$ are dual Gabor frames such that
\begin{equation} \label{normL1}
\sum_{x_j \in \Lambda_1} \phi(\cdot - x_j)\psi(\cdot - x_j)=\|\Lambda_1\|^{-1},
\end{equation}
holds, then \eqref{pair} is a Gabor pair.
\end{prop}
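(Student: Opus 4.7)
The task is to verify, for every $\ep\in(0,1]$, that the rescaled pair $\{\phi^\ep_{j,l}\}_{j,l}$, $\{\psi^\ep_{j,l}\}_{j,l}$ on the lattice $(\ep\Lambda_1)\times\Lambda_2$ is a dual Gabor frame. The natural tool is the Walnut representation of the Gabor frame operator (cf.\ Chapter~6 in \cite{Gro-book}): a Poisson-summation argument over $\Lambda_2$ gives, for compactly supported windows,
\[
S^{(\ep)}_{\phi^\ep,\psi^\ep}f(x)=C_{\Lambda_2}\sum_{x^*\in\Lambda_2^*}G^\ep_{x^*}(x)\,f(x-x^*),
\]
where $G^\ep_{x^*}(x)=\sum_{x_j\in\ep\Lambda_1}\phi^\ep(x-x_j)\psi^\ep(x-x_j-x^*)$ and the constant $C_{\Lambda_2}$ depends only on the (unchanged) modulation lattice $\Lambda_2$ and the Fourier normalization.

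At $\ep=1$ the hypothesis gives $S^{(1)}=I$. Testing on a dense family of $f$ forces $G^1_{x^*}\equiv 0$ for $x^*\ne 0$ and $C_{\Lambda_2}\,G^1_0\equiv 1$; combined with $G^1_0\equiv\|\Lambda_1\|^{-1}$ from \eqref{normL1}, this pins down $C_{\Lambda_2}\cdot\|\Lambda_1\|^{-1}=1$. A consequence of $G^1_{x^*}\equiv 0$ for $x^*\ne 0$ and $\phi,\psi\ge 0$ is that $\supp\phi-\supp\psi$ meets $\Lambda_2^*$ only at the origin, i.e., the original system is in the painless regime. For $\ep\in(0,1]$, the rescaling $\phi\mapsto\phi^\ep=\phi(\cdot/\ep)$ only shrinks the supports, so $\supp\phi^\ep-\supp\psi^\ep=\ep\,(\supp\phi-\supp\psi)$ still avoids $\Lambda_2^*\setminus\{0\}$, which gives $G^\ep_{x^*}\equiv 0$ for $x^*\ne 0$.

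For the diagonal term, the substitution $y=x/\ep$ and $x_j=\ep x_j'$ with $x_j'\in\Lambda_1$ yields
\[
G^\ep_0(x)=\sum_{x_j'\in\Lambda_1}\phi(y-x_j')\psi(y-x_j')=\|\Lambda_1\|^{-1},
\]
by \eqref{normL1} evaluated at $y$. Hence $S^{(\ep)}_{\phi^\ep,\psi^\ep}f=C_{\Lambda_2}\cdot\|\Lambda_1\|^{-1}\cdot f=f$, as required. The only delicate ingredient is the Walnut representation and the fact that its normalizing constant depends only on $\Lambda_2$, which one may quote from \cite{Gro-book} or \cite{JPTT1}. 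The key observation making the proposition work is precisely this insensitivity of $C_{\Lambda_2}$ to the translation lattice: the normalization fixed by \eqref{normL1} at $\ep=1$ therefore transfers verbatim to every $\ep\in(0,1]$.
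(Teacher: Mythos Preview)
The paper does not prove this proposition; it merely cites \cite{JPTT1}. So there is no in-paper argument to compare against, and your proposal has to be judged on its own.

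Your Walnut-representation approach is the right one, and the diagonal part is correct: the substitution $y=x/\ep$ reduces $G^\ep_0$ to \eqref{normL1}, and the normalizing constant $C_{\Lambda_2}$ depends only on the (unchanged) modulation lattice $\Lambda_2$.

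The gap is in the off-diagonal step. From $G^1_{x^*}\equiv 0$ for $x^*\in\Lambda_2^*\setminus\{0\}$ together with $\phi,\psi\ge 0$ you correctly deduce that $K:=\supp\phi-\supp\psi$ avoids $\Lambda_2^*\setminus\{0\}$. But the claim that $\ep K$ \emph{still} avoids $\Lambda_2^*\setminus\{0\}$ does not follow: scaling by $\ep\le 1$ contracts a set toward $0$ only when the set is star-shaped at $0$, and a Minkowski difference of two arbitrary compact supports need not be. Concretely, take $d=2$, $\Lambda_2^*=\mathbf Z^2$, let $\psi$ be a small non-negative bump at $0$, and let $\phi$ consist of two disjoint non-negative bumps, one near $0$ (equal to $1$ on $\supp\psi$, so that $\phi\psi=\psi$ and \eqref{normL1} becomes a standard partition-of-unity condition on $\psi$ alone) and one near $(3/2,0)$. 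With supports of diameter below $1/4$ one checks that all $G^1_{x^*}$ with $x^*\neq 0$ vanish and both Gabor systems are painless frames over a sufficiently fine $\Lambda_1$. But at $\ep=2/3$ the second bump of $\phi^\ep$ lands at $(1,0)\in\Lambda_2^*$, so $\phi^\ep(\,\cdot\,)\,\psi^\ep(\,\cdot\,-(1,0))\not\equiv 0$ and hence $G^{2/3}_{(1,0)}>0$; the rescaled systems are no longer dual.

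So either the statement as transcribed here carries an implicit hypothesis (for instance convex supports, under which $K$ is convex, hence star-shaped at $0$, and your argument then goes through verbatim), or the argument in \cite{JPTT1} handles the off-diagonal terms by a different mechanism. You should check the original source before committing to this step.
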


\par

\begin{rem}
If $\phi = \psi$, then \eqref{normL1} describes the tight frame property of the corresponding Gabor frame, cf. \cite[Theorem 6.4.1]{Gro-book}.
\end{rem}

\par

\begin{rem}\label{diskrmodnorm}
Let $p,q\in [1,\infty ]$, $\omega \in \MRs(\rr {2d})$, and $f \in (\mathcal E^{\{s\}})'(\rr d )$. If 
$(\{ \phi _{j,l} \} _{j,l\in J}, \{ \psi_{j,l} \} _{j,l\in J} )$ is a Gabor pair such that \eqref{f} and \eqref{coeff} hold, then it follows that $f\in M^{p,q}_{(\omega )}(\rr d)$  if and only if
$$
\nm f{[\ep ]}\equiv \Big (\sum _{l\in J} \Big ( \sum _{j\in J}|c_{j,l}(\ep
)\omega (\ep x_j,\xi _j)|^p \Big )^{q/p} \Big )^{1/q}
$$
is finite for every $\ep \in (0,1]$. Furthermore, for every $\ep \in (0,1]$, the norm $f\mapsto
\nm f{[\ep ]}$  is equivalent to the modulation space norm (1.3) (cf. \cite{Fe4,FG1,FG2,Gro-book}.)
\end{rem}

\par

\section{Discrete versions of wave-front sets}\label{sec6}

\par

In this section we define discrete wave-front sets with respect to Fourier Lebesgue and
modulation spaces, and prove that they agree with the corresponding wave-front sets
of continuous types. In the first part we consider discrete versions with respect to Fourier
Lebesgue and modulation spaces, and show that they agree with each other, and also
with the corresponding continuous ones. In the second part we consider more general
situations, where we discuss similar questions for sequences of spaces. In such a way
we are able to characterize Hörmander's wave-front sets with our discrete approach.

\par

\subsection{Discrete versions of wave-front sets with
respect to Fourier Lebesgue and modulation spaces} 

\par

We start with two definitions.

\par

\begin{defn}\label{discFourLebWF}
Let $s>1$, $q \in [1,\infty]$, $f \in (\mathcal S^{\{s\}})'(\rr d)$,
and let $(\Lambda_1,\Lambda_2)$ be a strongly admissible lattice pair in $\rr d$ such that
$x_0  \not\in \Lambda_1$. Moreover, let $\omega \in \MRs(\rr d)$ and
$\mathcal B=\mathscr FL^q_{(\omega )}(\rr d)$. Then the discrete wave-front set $\DF  _{\mathcal B}(f)$
consists of all $(x_0,\xi _0)\in X \times (\rr d\setminus 0),$
$X \subseteq \rr d$ is open such that for each
$\fy \in\mathcal D^{(s)}(X)$ with $\fy (x_0)\neq 0$ and each open conical neighbourhood $\Gamma $ of $\xi
_0$, it holds
$$
|\fy f|_{\mathcal B(\Gamma )}^{(D) } = \infty .
$$
\end{defn}

\par

For the definition of discrete wave-front sets of modulation spaces, we consider
Gabor pairs $(\{ \phi _{j,l} \}
_{j,l\in J}, \{ \psi _{j,l} \} _{j,l\in J} )$,
and let
$$
J_{x_0}(\ep )=J_{x_0}(\ep ,\phi ,\psi )=J_{x_0}(\ep ,\phi ,\psi
,\Lambda _1)
$$
be the set of all $j\in J$ such that
$$
x_0\in \supp \phi _{j,l}^\ep \quad \text{or}\quad x_0\in \supp \psi
_{j,l}^\ep.
$$

\par

\begin{defn}\label{discModWF}
Let $s>1$, $p,q\in [1,\infty ]$, $f\in (\mathcal S^{\{s\}})'(X)$,
and let
 $\phi,\psi\in \mathcal D^{(s)} (\rr d )$ be non-negative such that
 $(\{ \phi _{j,l} \} _{j,l\in J}, \{ \psi_{j,l} \} _{j,l\in J} )$ is a Gabor pair with respect to
the lattices $\Lambda _1$ and $\Lambda _2$ in $\rr d$. Moreover, let $\omega \in \MRs(\rr {2d})$
and $\mathcal C=M^{p,q}_{(\omega )}(\rr d)$. Then the discrete wave-front set $\DF  _{\mathcal C}(f)$
consists of all $(x_0,\xi _0)\in X \times (\rr d\setminus 0),$
$X\subseteq \rr d$ is open,
such that for each $\ep \in (0,1]$  and each
open conical neighbourhood $\Gamma $ of $\xi _0$, it holds
$$
\Big (\sum _{ \xi _l \in \Gamma \cap \Lambda _2}\Big (\sum
_{j\in J_{x_0}(\ep )}|c_{j,l}(\ep )\omega (\xi _l)|^p\Big )^{q/p}\Big
)^{1/q} = \infty ,
$$
where
$$
f=\sum _{j,l\in J}c_{j,l (\ep)} \phi _{j,l}
^\ep,\quad  c_{j,l} (\ep) = C_{\phi ,\psi}
(f,\psi_{j,l}^\ep )_{L^2(\rr d)}
$$
and the constant $C_{\phi ,\psi}$
depends on  $\phi$ and $\psi$ only.
\end{defn}

\par

Roughly speaking, $(x_0,\xi _0)\in \DF  _{\mathcal C}(f)$ means that $f$ is not locally in $\mathcal C$, in the direction $\xi _0$.
The following result shows that our wave-front sets coincide.

\par

\begin{thm} \label{dWFsets}
Let $s>1$, $X\subseteq \rr d$ be open and let $f\in (\mathcal D^{\{s\}})'(X)$.
Then
\begin{equation}\label{WFidentity}
\WF_{\mathcal B}(f)=\WF_{\mathcal C}(f)=\DF_{\mathcal B}(f)=\DF_{\mathcal C}(f).
\end{equation}
\end{thm}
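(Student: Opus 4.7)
The equality $\WF_{\mathcal B}(f) = \WF_{\mathcal C}(f)$ has already been established in Theorem \ref{wavefrontsequal}, so the plan is to complete the cycle by proving $\WF_{\mathcal B}(f) = \DF_{\mathcal B}(f)$ and $\WF_{\mathcal C}(f) = \DF_{\mathcal C}(f)$. As in the continuous case, all arguments reduce to local assertions, so I may assume $f \in (\mathcal E^{\{s\}})'(\rr d)$ and $\omega (x,\xi )=\omega (\xi )$ is $x$-independent.

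For $\WF_{\mathcal B} = \DF_{\mathcal B}$, the forward direction is an immediate consequence of Lemma \ref{discretelemma1}. If $(x_0,\xi_0) \notin \WF_{\mathcal B}(f)$, pick $\fy \in \mathcal D^{(s)}(X)$ with $\fy(x_0) \neq 0$ and an open cone $\Gamma$ around $\xi_0$ such that $|\fy f|_{\mathcal B(\Gamma)} < \infty$; since $\fy f \in (\mathcal E^{\{s\}})'(\rr d)$, Lemma \ref{discretelemma1} gives $|\fy f|^{(D)}_{\mathcal B(\Gamma_0 \cap \Lambda_2)} < \infty$ for any strongly admissible pair $(\Lambda_1,\Lambda_2)$ and any subcone $\Gamma_0$ with $\overline{\Gamma_0} \subseteq \Gamma$. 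For the reverse direction, suppose $(x_0,\xi_0) \notin \DF_{\mathcal B}(f)$, so there exist $\fy \in \mathcal D^{(s)}(X)$ with $\fy(x_0)\neq 0$ and an open cone $\Gamma$ around $\xi_0$ with $|\fy f|^{(D)}_{\mathcal B(\Gamma \cap \Lambda_2)} < \infty$. Lemma \ref{discretelemma2} requires the support of the truncated distribution to lie in a $\Lambda_1$-parallelepiped, so I would choose a second cutoff $\chi \in \mathcal D^{(s)}(X)$ with $\chi(x_0) \neq 0$ and $\supp \chi$ contained in the interior of some $D_1 \in \mathcal A(\Lambda_1)$ (possible since $x_0 \notin \Lambda_1$, after a shift of $D_1$ around $x_0$). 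Applying Corollary \ref{cuttoffdiscrete} with $\chi$ gives $|\chi \fy f|^{(D)}_{\mathcal B(\Gamma_1 \cap \Lambda_2)} < \infty$ for a subcone $\Gamma_1$, and then Lemma \ref{discretelemma2} applied to $\chi \fy f$ delivers $|\chi \fy f|_{\mathcal B(\Gamma_0)} < \infty$ for a further subcone $\Gamma_0$. Since $\chi \fy \in \mathcal D^{(s)}(X)$ and $(\chi \fy)(x_0) \neq 0$, this yields $(x_0,\xi_0) \notin \WF_{\mathcal B}(f)$.

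For $\WF_{\mathcal C}(f) = \DF_{\mathcal C}(f)$, the idea is to replace the continuous cutoff $\fy$ by a synthesis over those Gabor atoms indexed by $J_{x_0}(\ep)$. By Remark \ref{diskrmodnorm}, for compactly supported $f$ the continuous modulation norm is equivalent to the discrete norm $\nm f{[\ep]}$; a cone-restricted version of the same equivalence is needed here. In the direction $\WF_{\mathcal C} \supseteq \DF_{\mathcal C}$, suppose the discrete cone-restricted sum in Definition \ref{discModWF} is finite for some $\ep$. Setting $\fy = \sum_{j \in J_{x_0}(\ep)} \phi^\ep(\cdot - \ep x_j)$ produces a function in $\mathcal D^{(s)}(X)$ with $\fy(x_0) \neq 0$, and the Gabor synthesis \eqref{f} expresses $\fy f$ through coefficients $c_{j,l}(\ep)$ with $j$ essentially restricted to $J_{x_0}(\ep)$. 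The $M^{p,q}_{(\omega)}(\rr d \times \Gamma_0)$-norm of $\fy f$ is then dominated by the given discrete sum together with an off-cone remainder that is controlled through the rapid decay of $V_\phi \psi$ in the frequency variable, by exactly the same computation as in the $J_2$ estimate of Theorem \ref{wavefrontsequal} (combined with \eqref{estimate1} and Proposition \ref{stftestimates}). In the opposite direction, if $(x_0,\xi_0) \notin \WF_{\mathcal C}(f)$ with witness $\fy$ and cone $\Gamma$, one chooses $\ep$ small enough that each $j \in J_{x_0}(\ep)$ satisfies $\ep x_j \in \supp \fy$; then the coefficients $c_{j,l}(\ep)$ for $j \in J_{x_0}(\ep)$ coincide with the Gabor coefficients of $\fy f$ (times the window), and Remark \ref{diskrmodnorm} converts the finite continuous semi-norm into the desired discrete bound.

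The principal obstacle is this last cycle closure in the modulation-space case: decoupling the spatial localization $J_{x_0}(\ep)$ from the frequency-cone restriction $\Gamma \cap \Lambda_2$. The spatial truncation is a feature of the Gabor frame geometry and is dual to the multiplication by $\fy$ in Definition \ref{wave-frontdef1}, while the off-cone control must be extracted from the exponential decay of the STFT of the window supplied by Proposition \ref{stftestimates} and by the cone estimate \eqref{estimate1}. Once both truncations are established simultaneously, the four wave-front sets coincide through Theorem \ref{wavefrontsequal} and the two equivalences above, proving \eqref{WFidentity}.
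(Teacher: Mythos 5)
Your treatment of the Fourier--Lebesgue half is sound and matches what the paper does implicitly: $\WF_{\mathcal B}=\WF_{\mathcal C}$ is Theorem \ref{wavefrontsequal}, and $\WF_{\mathcal B}=\DF_{\mathcal B}$ follows from Lemma \ref{discretelemma1} in one direction and, in the other, from Corollary \ref{cuttoffdiscrete} together with Lemma \ref{discretelemma2} after localizing into a $\Lambda_1$-parallelepiped, exactly as you describe. The problem is the modulation-space half. You propose to close the cycle by proving $\WF_{\mathcal C}=\DF_{\mathcal C}$ directly, and you concede that this requires a cone-restricted version of the sampling equivalence in Remark \ref{diskrmodnorm}, i.e. that the $L^{p,q}$-norm of $V_\phi f$ over $\rr d\times\Gamma$ is controlled by the lattice sums over $\Gamma\cap\Lambda_2$ and conversely. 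That lemma is nowhere established, and proving it is essentially as hard as the theorem itself: it would force you to redo the off-cone/on-cone decomposition of Lemmas \ref{discretelemma1} and \ref{discretelemma2} in the full time-frequency plane. As written, the modulation-space part is a program, not a proof.

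The missing idea is the identity $c_{j,l}(\ep)=\mathscr F\bigl(f\,\psi(\cdo/\ep-x_j)\bigr)(\xi_l)$: the Gabor coefficients of $f$ are samples of the Fourier transform of the finitely many localized pieces $f\,\psi^{\ep}(\cdo-\ep x_j)$, $j\in J_{x_0}(\ep)$. With this, the quantity in Definition \ref{discModWF} is a finite superposition of discrete Fourier--Lebesgue seminorms $|f\,\psi(\cdo/\ep-x_j)|^{(D)}_{\mathcal B(H_0)}$, so the paper closes the cycle by proving $\DF_{\mathcal B}(f)=\DF_{\mathcal C}(f)$ using only Corollary \ref{cuttoffdiscrete} and the finiteness of $J_{x_0}(\ep)$: for one inclusion one chooses $\ep$ so small that $\supp\psi^{\ep}_{j,l}\subseteq X_0$ whenever $j\in J_{x_0}(\ep)$, so that $f\,\psi(\cdo/\ep-x_j)=(\fy f)\,\psi(\cdo/\ep-x_j)$ there; for the converse one picks $\kappa$ with $\kappa\sum_{j\in J_{x_0}(\ep)}\psi(\cdo/\ep-x_j)=1$ on $\supp\fy$ and applies H{\"o}lder over the finite index set. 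No new sampling theorem for the STFT is needed. A further small inaccuracy in your sketch: restricting the Gabor synthesis \eqref{f} to $j\in J_{x_0}(\ep)$ does not reproduce $\fy f$ with $\fy=\sum_{j\in J_{x_0}(\ep)}\phi^{\ep}(\cdo-\ep x_j)$ — the reconstruction involves the products $\phi^{\ep}\overline{\psi^{\ep}}$, and the Gabor coefficients of $\fy f$ for $j\notin J_{x_0}(\ep)$ need not vanish — which is precisely why the paper argues through the coefficients directly rather than through a synthesized cutoff.
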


\par

\begin{proof}
By Theorem \ref{wavefrontsequal} and Lemmas \ref{discretelemma1} and
\ref{discretelemma2}, it follows that the first two equalities in
\eqref{WFidentity} hold. The result therefore follows if we prove
that $\DF _{\mathcal B}(f) = \DF _{\mathcal C}(f)$.

\par

First assume that $(x_0,\xi _0)\notin \DF _{\mathcal B}(f)$, and choose $\fy \in \mathcal D^{(s)}(X)$, where $1<t<s$, an open neighbourhood
$X_0\subset \overline{X_0}\subset X$ of $x_0$ and conical neighbourhoods $\Gamma ,\Gamma
_0$ of $\xi _0$ such that
\begin{itemize}
\item
$
\overline{\Gamma _0}\subseteq \Gamma , \quad \fy (x)=1 \quad
\text{when}\quad x\in X_0 ,$
\item
$|\fy \, f|_{\mathcal B(H)}^{(D) }<\infty  ,\quad \text{when}\quad H=\Lambda _2\cap \Gamma .$
\end{itemize}
Now let $(\{ \phi _{j,l} \} _{j,l\in J}, \{ \psi _{j,l} \}
_{j,l\in J} )$ be a Gabor pair and choose $\ep \in (0,1]$
such that $\supp \phi _{j,l}^\ep$ and $\supp \psi _{j,l}^\ep$ are
contained in $X_0$ when $x_0\in \supp \phi _{j,l}^\ep$ and $x_0\in
\supp \psi _{j,l}^\ep$. Since
$$
c_{j,l}(\ep ) =C(f,\psi _{j,l}^\ep )_{L^2(\rr d)} = \mathscr F(f\,
\psi (\cdo /\ep -x_j))(\xi _l),
$$
it follows from these support properties that if $H_0=\Lambda _2\cap
\Gamma _0$, then
\begin{multline}\label{fchisums}
\Big ( \sum _{\xi _l \in H_0} |\mathscr F(f\, \psi (\cdo /\ep
-x_j))(\xi _l)\omega (\xi _l)|^q\Big )^{1/q}
\\[1ex]
= |f\, \psi (\cdo /\ep -x_j)|_{\mathcal B(H_0)}^{(D) }
= |f\, \fy \psi (\cdo /\ep -x_j)|_{\mathcal B(H_0)}^{(D) },
\end{multline}
when $j\in J_{x_0}(\ep )$. Hence, by combining Corollary
\ref{cuttoffdiscrete} with the facts that $J_{x_0}(\ep )$ is finite
and $|\fy \, f|_{\mathcal B(H)}^{(D) }<\infty$, it
follows that the expressions in \eqref{fchisums} are finite and
$$
\Big ( \sum _{ \xi _l \in H_0} \Big (\sum _{j\in J_{x_0}(\ep
)}|\mathscr F(f\, \psi (\cdo /\ep -x_j))(\xi _l)\omega (\xi _l)|^p\Big
)^{q/p}\Big )^{1/q}<\infty .
$$
This implies that $(x_0,\xi _0)\notin \DF _{\mathcal C}(f)$,
and we have proved that $\DF _{\mathcal C}(f) \subseteq
\DF _{\mathcal B}(f)$.

\par

In order to prove the opposite inclusion we assume that $(x_0,\xi
_0)\notin \DF _{\mathcal C}(f)$, and we choose $\ep \in (0,1]$,
Gabor pair $(\{ \phi _{j,l} \} _{j,l\in J}, \{ \psi
_{j,l} \} _{j,l\in J} )$ and conical neighbourhoods $\Gamma ,\Gamma
_0$ of $\xi _0$ such that $\overline{\Gamma _0}\subseteq \Gamma$ and
\begin{equation}\label{discrmodfinitecond}
\Big ( \sum _{ \xi _l \in H} \Big (\sum _{j\in J_{x_0}(\ep
)}|\mathscr F(f\, \psi (\cdo /\ep -x_j))(\xi _l)\omega (\xi _l)|^p\Big
)^{q/p}\Big )^{1/q}<\infty ,
\end{equation}
when $H=\Lambda _2\cap \Gamma$. Also choose $\fy ,\kappa \in \mathcal D^{(s)}(X)$
such that $\fy (x_0)\neq 0$ and
$$
\kappa (x)\sum _{j\in J_{x_0}(\ep )} \psi (x/\ep-x_j)=1,
\quad
\text{when}\quad x\in \supp \fy .
$$
Since $J_{x_0}(\ep )$ is finite, H{\"o}lder's inequality gives
\begin{multline*}
|\fy \, f|_{\mathcal B(H_0)}^{(D) } =\Big |\sum
_{j\in J_{x_0}(\ep )} (\fy \kappa ) \, (f \, \psi (\cdo /\ep -x_j))\Big
| _{\mathcal B(H_0)}^{(D) }
\\[1ex]
\le \Big ( \sum _{ \xi _k \in H_0} \Big (\sum _{j\in J_{x_0}(\ep
)}|\mathscr F((\fy \kappa )f\, \psi (\cdo /\ep -x_j))(\xi _l)\omega
(\xi _l)|\Big )^{q}\Big )^{1/q}
\\[1ex]
\lesssim\Big ( \sum _{ \xi _l \in H_0} \Big (\sum _{j\in J_{x_0}(\ep
)}|\mathscr F((\fy \kappa ) f\, \psi (\cdo /\ep -x_j))(\xi _l)\omega
(\xi _l)|^p\Big )^{q/p}\Big )^{1/q},
\end{multline*}
where $H_0 = \Lambda_2 \cap \Gamma_0$.
By Corollary \ref{cuttoffdiscrete} and \eqref{discrmodfinitecond} it
now follows that the right-hand side in the last estimates is
finite. Hence, $|\fy \, f|_{\mathcal B(H_0)}^{(D) }<\infty $, which shows that $(x_0,\xi _0)\notin
\DF _{\mathcal B}(f)$, and we have proved that
$\DF _{\mathcal B}(f)\subseteq \DF _{\mathcal C}(f)$. The proof is complete.
\end{proof}

\par

In the following corollary we give a discrete description of the $s$-wave-front set,
$\WF_{s} (f),$ from Section \ref{sec2}.

\begin{cor} Let $q \in [1,\infty],$ $ s>1$, and let $ \omega_k (\xi) \equiv e^{k|\xi|^{1/s}} $ for 
$ \xi \in \rr d$ and  $k>0$. If $f \in (\mathcal{E}^{\{s\}})' (\rr d)$, then
\begin{equation}
\bigcap_{k>0} \DF _{\mathscr FL^q _{(\omega_k )}} (f) =  \WF_{s} (f).
\label{intersection2}
\end{equation}
\end{cor}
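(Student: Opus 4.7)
The plan is to obtain the identity as an immediate consequence of two previously established results. First, Proposition \ref{s-WF set} gives the continuous version of the claimed equality, namely
\[
\bigcap_{k>0} \WF _{\mathscr FL^q _{(\omega_k )}} (f) =  \WF_{s} (f),
\]
for any $f\in (\mathcal D^{(s)})'(\rr d)$, and in particular for any $f\in (\mathcal E^{\{s\}})'(\rr d)$ by the inclusions between spaces of ultradistributions recalled in Section~1.

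Second, Theorem \ref{dWFsets} identifies each continuous wave-front set of Fourier--Lebesgue type with its discrete counterpart. Applied with $\mathcal B = \mathscr FL^q_{(\omega_k)}$, it yields
\[
\WF_{\mathscr FL^q_{(\omega_k)}}(f) = \DF_{\mathscr FL^q_{(\omega_k)}}(f),\qquad k>0.
\]
Intersecting over all $k>0$ and combining with the first identity gives \eqref{intersection2}.

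So the proof is essentially a one-line chain of equalities; there is no real obstacle beyond checking that the hypotheses of the two cited results are satisfied by $\omega_k\in\MRs(\rr d)$ (the weights $\omega_k$ are submultiplicative and trivially Beurling--Domar) and that $f\in(\mathcal E^{\{s\}})'(\rr d)\subseteq (\mathcal D^{(s)})'(\rr d)$ so that both theorems apply. The only minor point to remark on is that Theorem~\ref{dWFsets} is stated for $f\in(\mathcal D^{\{s\}})'(X)$ and Proposition~\ref{s-WF set} for $f\in(\mathcal D^{(s)})'(\rr d)$; since $(\mathcal E^{\{s\}})'(\rr d)$ embeds into both, the hypotheses of both results are met in our setting.
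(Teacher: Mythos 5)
Your proof is correct and matches the paper's intent: the corollary is stated without proof precisely because it follows by combining Proposition \ref{s-WF set} with the identity $\WF_{\mathscr FL^q_{(\omega_k)}}(f)=\DF_{\mathscr FL^q_{(\omega_k)}}(f)$ from Theorem \ref{dWFsets} and intersecting over $k>0$. Your remarks on checking that $\omega_k\in\MRs(\rr d)$ and that $(\mathcal E^{\{s\}})'(\rr d)$ embeds into both $(\mathcal D^{(s)})'(\rr d)$ and $(\mathcal D^{\{s\}})'(\rr d)$ are exactly the right hypotheses to verify.
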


We remark that a discrete analogue of $\WF_{s} (f)$ also can be introduced in a similar way as
in \cite{RT1, RT2}. Let us denote this set by $\WF_{s,T} (f) $, and refer to it as
\emph{toroidal} $s$-wave-front set.
It can be proved that
\begin{equation}  \label{toroid}
\WF_{s,T} (f) = \mathbf{T}^d \times  \mathbf{Z}^d  \cap \WF_s (f),
\end{equation}
where $\mathbf{T}^d $ is the torus in $\mathbf{R}^d$.

\par

A significant difference between the toroidal wave-front sets and our discrete wave-front sets
lies in the fact that $\WF _T(f)$ only informs about the \emph{rational} directions
for the propagation of singularities of $f$ at a certain point,
while $\DF  (f)=\WF (f)$ takes care of \emph{all} directional for $f$ to that point, we refer to \cite{JPTT1} for an example.

\par

\end{document}